 \def\misajour{03/06/2023}
\newtheorem{theorem}{Theorem}[section]
\newtheorem{corollary}[theorem]{Corollary}
\newtheorem{lemma}[theorem]{Lemma}
\newtheorem{proposition}[theorem]{Proposition}
\newtheorem{conjecture}[theorem]{Conjecture}
\theoremstyle{definition}
\newtheorem{definition}[theorem]{Definition}
\newtheorem{remark}[theorem]{Remark}
\newtheorem{example}[theorem]{Example}
\numberwithin{equation}{section} 
\newcommand {\Q}{{\mathbb{Q}}}
\newcommand {\Z}{{\mathbb{Z}}}  
\newcommand{\rmH}{\mathrm {H}}
\newcommand{\rmh}{\mathrm {h}}
\newcommand{\rme}{\mathrm {e}}
\newcommand{\calA}{\mathcal {A}}
\newcommand{\calE}{\mathcal {E}} 
\newcommand{\calF}{\mathcal {F}}
\newcommand{\calG}{\mathcal {G}}
\newcommand{\calR}{\mathcal {R}}
\newcommand{\calZ}{\mathcal {Z}}
\begin{document}


\baselineskip=17pt


\title{Number of integers represented by families 
\\  of binary forms  II: binomial forms}

\author{\'Etienne Fouvry\\
  Universit\' e Paris--Saclay, CNRS,    \\
Laboratoire  de  Math\' ematiques  d'Orsay,     \\
91405   Orsay,   France \\
E-mail: Etienne.Fouvry@universite-paris-saclay.fr
\and 
Michel Waldschmidt\\
Sorbonne Universit\' e and Universit\' e de Paris, \\
CNRS, IMJ--PRG, \\
75005 Paris, France\\
E-mail: michel.waldschmidt@imj-prg.fr}

\date{\today}
\date{\misajour}

\maketitle

\renewcommand{\thefootnote}{}

\footnote{2020 \emph{Mathematics Subject Classification}: Primary 11E76; Secondary 11D45 11D85.}

\footnote{\emph{Key words and phrases}: Binomial binary forms, Representation of integers by binomial binary forms, Families of Diophantine equations.}

\renewcommand{\thefootnote}{\arabic{footnote}}
\setcounter{footnote}{0}
\hfill  {\it Cet article est d\' edi\' e \`a Henryk Iwaniec,}

\hfill{\it  en t\' emoignage de notre admiration }
 
 \hfill{\it  pour ses contributions majeures }
 
 \hfill{\it \`a la th\' eorie analytique des nombres.}
     
\begin{abstract}  
We consider some families of binary binomial forms $aX^d+bY^d$, with $a$ and $b$ integers. Under suitable assumptions, we prove that every rational integer  $m$ with  $|m|\ge 2$ is
only represented  by a finite number of the forms of this family (with varying  $d,a,b$).  Furthermore 
{the number of such forms of degree $\ge d_0$ representing $m$  is bounded by   $O(|m|^{(1/d_0)+\epsilon})$} uniformly for $\vert m \vert \geq 2$.  We also prove that the integers in the interval $[-N,N]$ represented by one of the form of the family with degree $d\geq d_0$ are almost all represented by some form of the family with degree $d=d_0$. 

In  a previous {paper} we investigated the particular case where the binary binomial forms are positive definite. We now treat the general case by using  a lower bound for linear forms of logarithms.    \end{abstract}

\section{Introduction} \label{intro}
 When $d$, $a$ and $b$ are rational integers different from $0$, with $d\geq 3$, Theorem 1.1 of \cite{SX} gives 
 an asymptotic estimate for  the number of rational integers in the interval $[-N,N]$ represented by the binary form 
 $aX^d+bY^d$. This {estimate} has the shape
 $$
 C_{a,b}N^{2/d} +O(N^\beta)\quad\hbox{ {as} $N\to\infty$},
 $$
 where the exponent $\beta <2/d$ is explicit and where the constant $C_{a,b} >0$ is also explicit (it corresponds to the constant $C_F=A_FW_F$ in  \cite[Corollary 1.3]{SX} associated with the binary form $F(X,Y)=F_{a,b,d} (X,Y)=aX^d+bY^d$. For more precision see \S \ref{IsoAuto} below). Here we consider the representation of integers by some element of families of such binary binomial forms
 
 For every integer $d\ge 3$, let $\calE_d$ be a finite subset of  $\Z\setminus\{0\}\times\Z\setminus\{0\}$ and let $\calF_d$
 be the set of binary binomial forms  $F_{a,b,d} (X,Y)$ with  $(a,b)\in\calE_d$. We are interested in the representation of integers $m \in \Z$ by some form of the family $\calF=\bigcup_{d\ge 3}\calF_d$. For $d\ge 3$ and $m$ in $\Z$, we introduce the two sets
$$
\begin{aligned} 
\calG_{\ge d}(m)=
\Big\{(&d',a,b,x,y)\;
 \mid \;
m=ax^{d'}+by^{d'}\; \hbox{ with } 
\\
& d'\ge d,\;  (a,b)\in\calE_{d'}, \; (x,y)\in\Z^2 \; \hbox{ and } 
   \max\{|x|,|y|\}\ge 2 \Big\} 
  \end{aligned}
  $$ 
 and  
$$
\calR_{\ge d}=\left\{m\in\Z\; \mid\; \calG_{\ge d}(m)\not=\emptyset \right\}.
$$
For $N$ positive integer, we denote 
 $$
 \calR_{\ge d}(N)=\calR_{\ge d}\cap[-N,N].
 $$
The set $\mathcal E_d$ must also satisfy the following hypotheses{\it 
\vskip .2cm
 (C1) : For every $(a,b)\neq (a',b')\in \calE_d$, at least one of ratios $a/a'$ and $b/b'$ is not the $d$--th power of a rational number,
\vskip .2cm
  (C2):  For every $(a,b)\neq (a',b')\in \calE_d$, at least one of ratios $a/b'$ and $b/a'$ is not the $d$--th power of a rational number.
}

The exponent $\vartheta_d<2/d$ is defined in \cite[(2.1)]{FW2}:
  \begin{equation*} 
  \theta_d=
  \begin{cases}
  \frac {24\sqrt 3+73}{60\sqrt 3 +73}=\frac {2628\sqrt 3-1009}{5471}=0.6475\dots &\text{ {for} } d=3,\\
  \frac {2\sqrt d +9}{4 d \sqrt d -6\sqrt d +9} & \text{ {for} } 4\leq d\leq 20,\\
  \frac 1{d-1} & \text{ {for} } d\geq 21.
  \end{cases}
  \end{equation*}
 When the family $\calF$ is given and when the integer  $d$ is  $\ge 3$, 
the integer  $d^\dag$ is given by the formula
$$
d^\dag :=
\begin{cases}
\inf\{ d' : d' >d \text{ such that } \calF_{d'} \ne \emptyset\}
&\hbox{if there exists  $d'>d$ such that  $\calF_{d'} \ne \emptyset$,}
\\
\infty
& \hbox{if  $\calF_{d'} = \emptyset$ for all  $d'>d$.}
\end{cases}
$$   
Our first result is the following.
\begin{theorem}[Positive definite case]\label{Th:casDefiniPositif}
Let $\calE_d\subset \Z_{>0}\times\Z_{>0}$ satisfying \it{(C1)} and \it{(C2)}  above and the equality  $\calE_d=\emptyset$ for odd $d$.  Furthermore, we suppose that 
\begin{equation}\label{Equation:CardEd}
\frac  1 d \log(\sharp\calE_d+1)\to 0\quad \hbox{ as  } \quad d\to\infty.
\end{equation}
Then 
\\
(a) For all  $m\in\Z\setminus\{0,1\}$ and all  $d\ge 4$, the set  $\calG_{\ge d}(m)$ is finite. Furthermore, for all  $d\ge 4$ and all  $\epsilon>0$, we have, as  $|m|\to\infty$, 
$$
\sharp \calG_{\ge d}(m)=O\left(|m|^{(1/d)+\epsilon  }\right).
$$
(b)
For all 
$d\ge 4$ and all $\epsilon>0$, we have, as  $N\to\infty$,
$$
\sharp \calR_{\ge d}(N)=
\left(\sum_{(a,b)\in\calE_d}  C_{a,b}\right) N^{2/d}+ O_{d,\epsilon} \Bigl(N^{\max\{\vartheta_d+\epsilon,2/d^\dag\}}\Bigr).
$$
(c) In the above formula, we have $C_{a,b} = A_{F_{a,b,d}} W_{F_{a,b,d}}$,
where 
 $$
A_{F_{a,b,d}} = \iint_{\vert ax^d+by^d\vert \leq 1} dxdy
$$
and where the values of the rational positive numbers $W_{F_{a,b,d}}$ are given in Proposition \ref{valuesofWF}. 
\end{theorem}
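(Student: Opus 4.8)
The plan is to handle the three assertions in turn, the substance lying in part (b). Throughout, positive definiteness is the organising principle: since every admissible $(a,b)$ has $a,b\ge 1$ and every degree occurring is even, a representation $m=ax^{d'}+by^{d'}$ with $\max\{|x|,|y|\}\ge 2$ forces $m\ge 2^{d'}$. In particular $m\ge 2$, only positive $m$ can be represented, and only degrees $d'\le \log_2 m$ can occur; this last observation already yields the finiteness of $\calG_{\ge d}(m)$ asserted in (a), since for each of the finitely many admissible $d'$ there are finitely many pairs $(a,b)\in\calE_{d'}$ and finitely many $(x,y)$.

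For the quantitative bound in (a), fix $d'$ and $(a,b)\in\calE_{d'}$. Positive definiteness gives $0\le ax^{d'}\le m$ and $0\le by^{d'}\le m$, whence $|x|,|y|\le m^{1/d'}$; for each of the $O(m^{1/d'})$ admissible values of $x$ the equation fixes $y^{d'}$, hence at most two values of $y$. Thus each form of degree $d'$ contributes $O(m^{1/d'})$ solutions and
$$
\sharp\calG_{\ge d}(m)\ll \sum_{d\le d'\le \log_2 m}\sharp\calE_{d'}\,m^{1/d'}.
$$
The term $d'=d$ is $O_d(m^{1/d})$. For $d'>d$ I would invoke the growth hypothesis \eqref{Equation:CardEd}: given $\epsilon>0$, choose $\delta$ with $\delta/\log 2<\epsilon$, so that $\sharp\calE_{d'}\le e^{\delta d'}\le m^{\epsilon}$ uniformly for $d'\le\log_2 m$ (the finitely many degrees below the threshold of \eqref{Equation:CardEd} being absorbed into the implied constant), while $m^{1/d'}\le m^{1/(d+1)}$. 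Summing the $O(\log m)$ remaining terms gives $O(m^{1/(d+1)+2\epsilon})$, which is absorbed into $O(m^{1/d+\epsilon})$ after renaming $\epsilon$.

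For (b), the input is the single–form count of \cite{SX} (see also \cite{FW2}): for fixed $d$ and $(a,b)\in\calE_d$ the set $V_{a,b}(N)$ of integers of $[-N,N]$ represented by $F_{a,b,d}$ has cardinality $C_{a,b}N^{2/d}+O(N^{\vartheta_d+\epsilon})$. Since $\calR_{\ge d}(N)=\sharp\bigl(\bigcup V_{a,b}(N)\bigr)$ over all $(a,b)\in\calE_{d'}$, $d'\ge d$, I would bracket it by Bonferroni:
$$
\sum_{(a,b)\in\calE_d}\sharp V_{a,b}(N)-\!\!\sum_{\{(a,b),(a',b')\}}\!\!\sharp\bigl(V_{a,b}(N)\cap V_{a',b'}(N)\bigr)\ \le\ \calR_{\ge d}(N)\ \le\ \sum_{d'\ge d}\ \sum_{(a,b)\in\calE_{d'}}\sharp V_{a,b}(N).
$$
The diagonal sum over $\calE_d$ supplies the main term $\bigl(\sum_{(a,b)\in\calE_d}C_{a,b}\bigr)N^{2/d}$ with error $O(N^{\vartheta_d+\epsilon})$. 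The degrees $d'>d$ contribute $\sum_{d^\dag\le d'\le\log_2 N}\sharp\calE_{d'}\,O(N^{2/d'})$: the term $d'=d^\dag$ is $O(N^{2/d^\dag})$, while for $d'>d^\dag$ one has $N^{2/d'}\le N^{2/(d^\dag+1)}$, and $\sharp\calE_{d'}\le N^{\epsilon}$ together with the $O(\log N)$ range keeps these terms $o(N^{2/d^\dag})$ (by the fixed gap $2/d^\dag-2/(d^\dag+1)$, choosing $\epsilon$ small). Hence the higher–degree part is genuinely $O(N^{2/d^\dag})$, with no loss of $\epsilon$.

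The crux, and the step I expect to be the main obstacle, is the overlap term, where conditions (C1) and (C2) enter decisively. If both $a/a'$ and $b/b'$ were $d$-th powers, say $s^d,t^d$, then $F_{a,b,d}(x,y)=F_{a',b',d}(sx,ty)$ and the two forms would share a positive proportion of their values; (C1) forbids this, and (C2) forbids the analogous coincidence $F_{a,b,d}(x,y)=F_{a',b',d}(sy,tx)$ after swapping variables. A common value $m\le N$ gives $ax^d+by^d=a'u^d+b'v^d$, which I would rewrite as $ax^d-a'u^d=b'v^d-by^d=:k$; under (C1)--(C2) the binomial forms $aX^d-a'U^d$ and $b'V^d-bY^d$ are non-degenerate (distinct roots, no rational diagonal symmetry relating the two sides), so each value of $k$ is attained by $O(N^{\epsilon})$ solutions of each binomial Thue equation. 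Counting reduced solutions (working in a fundamental domain for the diagonal sign symmetries, where each value has boundedly many representations) then bounds the number of distinct common values by $O(N^{1/d+\epsilon})$; since $1/d<\vartheta_d$, this is absorbed into $O(N^{\vartheta_d+\epsilon})$. Summing over the finitely many pairs from $\calE_d$ and combining with the two contributions above yields the announced error $O\bigl(N^{\max\{\vartheta_d+\epsilon,\,2/d^\dag\}}\bigr)$. Finally, (c) is bookkeeping: $C_{a,b}$ is exactly the leading constant $C_F=A_FW_F$ of \cite[Corollary 1.3]{SX} for $F=F_{a,b,d}$, where $A_{F_{a,b,d}}=\iint_{|ax^d+by^d|\le 1}dx\,dy$ is the area attached to the lattice-point count and $W_{F_{a,b,d}}$ is the rational density factor recording the automorphisms and the multiplicity of representation, whose explicit values are those of Proposition \ref{valuesofWF}.
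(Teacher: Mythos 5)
Your parts (a) and (c), and your treatment of the degrees $d'>d$ in (b), are sound and essentially reproduce the paper's own route: the inequality $X^{d'}\le m$ forced by positive definiteness, fed into a sum over $d'\le \log_2 m$ controlled by \eqref{Equation:CardEd}, is exactly Lemma \ref{Lemme:Facile} with $\theta=1$. But the paper does not re-derive the degree-$d$ asymptotics by hand as you do: it verifies that the family is regular in the sense of Definition \ref{Definition:reguliere} (condition (ii) from Lemma \ref{Lemme:Facile}\,(b), condition (i) from (C1)--(C2) via Proposition \ref{isobetweenbinforms} and Corollary \ref{nonisomorphic}) and then invokes Theorem \ref{Th:VarianteTheorem1.11}, which imports wholesale the machinery of \cite{FW2} and \cite{SX} --- including, crucially, the estimate for the number of integers represented by two distinct non-isomorphic forms of the same degree.

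That pairwise overlap estimate is precisely where your proposal has a genuine gap. You claim that the number of common values of $F_{a,b,d}$ and $F_{a',b',d}$ in $[-N,N]$ is $O(N^{1/d+\epsilon})$, by writing $ax^d-a'u^d=b'v^d-by^d=:k$ and using that, for fixed $k\neq 0$, each binomial Thue equation has $O(N^{\epsilon})$ solutions. This counting does not close, because the number of relevant values of $k$ is not controlled. Setting $r_1(k)=\sharp\{(x,u):ax^d-a'u^d=k,\ |x|,|u|\ll N^{1/d}\}$ and $r_2(k)=\sharp\{(y,v):b'v^d-by^d=k,\ |y|,|v|\ll N^{1/d}\}$, the best your argument yields for the number of quadruples (hence of common values) is $\sum_{k\neq 0}r_1(k)r_2(k)\le \bigl(\sup_{k\neq 0}r_2(k)\bigr)\sum_{k}r_1(k)=O(N^{\epsilon})\cdot O(N^{2/d})=O(N^{2/d+\epsilon})$, i.e.\ the same order as the main term, hence useless; only the case $k=0$, which is where (C1) genuinely enters, gives the $O(N^{1/d})$ you announce. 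An overlap bound of the needed quality --- $o(N^{2/d})$, indeed $O(N^{\vartheta_d+\epsilon})$ --- for two $\mathrm{GL}(2,\Q)$-non-isomorphic forms is a deep theorem of Stewart--Xiao \cite{SX}, proved with the determinant method, not with elementary Thue-equation counting; the whole role of (C1)--(C2) in the paper is to guarantee, via Proposition \ref{isobetweenbinforms}, that distinct pairs in $\calE_d$ give non-isomorphic forms so that this theorem (packaged inside Theorem \ref{Th:VarianteTheorem1.11}) applies. Without importing that result, your Bonferroni lower bound does not deliver the asymptotic formula in (b).
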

\begin{remark} One finds explicit values, in terms of the $\Gamma$--function, of the fundamental area $A_{F_{a,b,d}}$ in \cite[Corollary 1.3]{SX}.
 
The hypothesis  $\sharp \calE_d\le d^{A_1}$ in  \cite[Theorem 1.13]{FW2}, which implies the condition  $(\mathrm {iii})$  in  Definition \ref{agreable} below  of a	 $(A,A_1, d_0, d_1, \kappa)$--regular family,  
is replaced here   by  \eqref{Equation:CardEd} which cannot be omitted: for  $d\ge 3$ and  $N=2^{2d}+1$, each of $2^d$ integers of the form $a2^d+1$, $a=1,2,3,\dots, 2^d$ is  represented by one of the form  $aX^d+Y^d$ with the choice $x=2$, $y=1$. 
\end{remark}
 Our second result is
\begin{theorem}[General case]\label{Th:casGeneral}
There exists an absolute constant  $\eta>0$ with the following property.
We suppose that there exists  $d_0>0$ such that, for all    $d\ge d_0$, we have the inequality
$$
\max_{(a,b)\in\calE_d}\{|a|,|b|\}\le \exp(\eta d/\log d).
$$  
Then
\\
(a) For all  $m\in\Z\setminus\{-1,0,1\}$ and all  $d\ge 3$, the set  $\calG_{\ge d}(m)$ is finite. Furthermore,    for all  $d\ge 3$ and all $\epsilon>0$, we have, as $|m|\to\infty$,
$$
\sharp \calG_{\ge d}(m)=O\left(|m|^{(1/d)+\epsilon}\right).
$$
(b) 
For all $d\ge 3$ and all  $\epsilon>0$, we have, as  $N\to\infty$,
$$
\sharp \calR_{\ge d}(N)=
\left(\sum_{(a,b)\in\calE_d}  C_{a,b}\right) N^{2/d}+ O_{d,\epsilon}  \Bigl(N^{\max\{\vartheta_d+\epsilon,2/d^\dag\}} \Bigr).
$$ 
(c) The properties of the constant $C_{a,b}$ are the same as in Theorem \ref{Th:casDefiniPositif} (c).
\end{theorem}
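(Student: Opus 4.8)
The plan is to show that the only obstruction separating the general case from the positive definite case of Theorem~\ref{Th:casDefiniPositif} is the near--cancellation $ax^{d'}\approx-by^{d'}$ that can occur for an indefinite form. Concretely, I would prove that a representation $m=ax^{d'}+by^{d'}$ with $|m|\ge2$ and $\max\{|x|,|y|\}\ge2$ still satisfies the two numerical constraints that are automatic when $a,b>0$, namely $d'\ll\log|m|$ and $\max\{|x|,|y|\}\ll|m|^{2/d'}$. In the positive definite case these follow from $|ax^{d'}+by^{d'}|\ge\max\{|x|,|y|\}^{d'}$; for indefinite forms I would recover them from a lower bound for a linear form in two logarithms. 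Once they are available, the lattice-point counting and sieving developed for Theorem~\ref{Th:casDefiniPositif} (following \cite{SX,FW2}) apply unchanged, and (a), (b), (c) follow.

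First I would dispose of the degenerate representations: if $xy=0$ the equation is monomial and gives $\max\{|x|,|y|\}^{d'}\le|m|$, while $|ax^{d'}|=|by^{d'}|$ forces either $m=0$ or $|m|\ge2^{d'}$, both confining $d'$ to $O(\log|m|)$. In the remaining case I may assume $1\le|x|\le|y|$ (so $|y|\ge2$) and $\Lambda:=\log|a|-\log|b|+d'(\log|x|-\log|y|)\ne0$. If $|b|\,|y|^{d'}<2|m|$ then $d'\log|y|<\log(2|m|)$ directly; otherwise the identity $m=by^{d'}\bigl(1+ax^{d'}/(by^{d'})\bigr)$ gives $|\Lambda|\le 2|m|/(|b|\,|y|^{d'})$, that is $\log|\Lambda|\le\log(2|m|)-d'\log|y|$. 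Against this I set the sharp estimate for a linear form in the two logarithms of $|a/b|$ and $|x/y|$ (Matveev, or Laurent--Mignotte--Nesterenko), in the shape $\log|\Lambda|\ge -C\,\log\max\{|a|,|b|\}\cdot\log|y|\cdot\log d'$ with an absolute constant $C$. The hypothesis $\log\max\{|a|,|b|\}\le\eta\,d'/\log d'$ is tailored exactly to absorb the factor $\log d'$, turning the lower bound into $-C\eta\,d'\log|y|$; comparison yields $(1-C\eta)\,d'\log|y|\le\log(2|m|)$, so fixing the absolute constant $\eta\le1/(2C)$ gives $d'\log|y|\le2\log(2|m|)$, which is precisely $d'\ll\log|m|$ together with $\max\{|x|,|y|\}\ll|m|^{2/d'}$. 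This is where $|m|\ge2$ enters (for $|m|=1$ the inequality is vacuous), and $m=-1$ is excluded as the image of $m=1$ under $m\mapsto-m$.

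For (a) I would count the solutions in $\calG_{\ge d}(m)$ according to whether $\min\{|x|,|y|\}\le1$. The degrees $3\le d'<\max\{d,d_0\}$ carry only finitely many forms, each a Thue equation of degree $\ge3$ with $O_\epsilon(|m|^\epsilon)$ solutions. For $d'\ge d_0$, a solution with a variable in $\{0,\pm1\}$ is elementary: taking $y=1$ and $|x|\ge2$ gives $2^{d'}\le|m-b|$, hence $d'=O(\log|m|)$ and at most $O(|m|^{1/d})$ admissible $x$; since the hypothesis yields $\sharp\calE_{d'}\le\exp(O(d'/\log d'))=|m|^{o(1)}$, summing over the $O(\log|m|)$ relevant degrees gives $O(|m|^{(1/d)+\epsilon})$, the dominant contribution. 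Solutions with $\min\{|x|,|y|\}\ge2$ are governed by the previous paragraph: their degree and size are bounded, each form supplies $O_\epsilon(|m|^\epsilon)$ of them by the Thue-equation bound, and the subexponential size of $\bigcup_{d'}\calE_{d'}$ makes their total $|m|^{o(1)}$. Adding both gives $\sharp\calG_{\ge d}(m)=O(|m|^{(1/d)+\epsilon})$, in particular finiteness.

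For (b) the main term is carried by the lowest degree $d$: by \cite[Corollary~1.3]{SX} each $F_{a,b,d}$ with $(a,b)\in\calE_d$ represents $C_{a,b}N^{2/d}+O(N^{\vartheta_d})$ integers of $[-N,N]$, and (C1),(C2) bound by $O(N^{\vartheta_d+\epsilon})$ the integers represented by two distinct degree-$d$ forms, exactly as for Theorem~\ref{Th:casDefiniPositif}. The integers represented only in degree $d'\ge d^\dag$ are counted through $\sharp\{(x,y):|ax^{d'}+by^{d'}|\le N\}=A_{F_{a,b,d'}}N^{2/d'}+O(\cdots)$, where $A_F$ is finite for $d'\ge3$ even for indefinite forms and satisfies $A_F\ll(|a||b|)^{-1/d'}\le1$; summing over the subexponentially many forms of degree $\ge d^\dag$, dominated by $d'=d^\dag$, gives $O_{d,\epsilon}(N^{2/d^\dag})$. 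Collecting the three contributions yields the error $O_{d,\epsilon}(N^{\max\{\vartheta_d+\epsilon,\,2/d^\dag\}})$, and (c) is unchanged from the definite case with $C_{a,b}=A_{F_{a,b,d}}W_{F_{a,b,d}}$ and $W_F$ as in Proposition~\ref{valuesofWF}. The one genuine obstacle is the linear-forms step: one needs a two-logarithm lower bound whose dependence on the large coefficient $d'$ is only $\log d'$, and then the verification that the absolute $\eta$ can be chosen, independently of $d'$, small enough that $C\eta<1$ holds uniformly for $d'\ge d_0$ --- the shape $\exp(\eta d/\log d)$ of the hypothesis being dictated precisely by this cancellation.
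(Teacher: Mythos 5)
Your diophantine core coincides with the paper's: the paper also controls the indefinite cancellation by a lower bound for a linear form in the two logarithms of $a/b$ and $x/y$ (Proposition \ref{Prop:9.22}, i.e.\ \cite[Corollary 9.22]{W}, packaged as Corollary \ref{Cor:axd+byd} and \eqref{Equation:diophantienne}), with the hypothesis $\exp(\eta d/\log d)$ calibrated exactly so that the factor $\log d'$ is absorbed; and for part (b) both you and the paper then hand the problem to the machinery of \cite{SX} and \cite{FW2} (the paper does this by verifying Definition \ref{Definition:reguliere} through Corollary \ref{nonisomorphic} and Theorem \ref{Th:majorationasymptotique}(b), and then quoting Theorem \ref{Th:VarianteTheorem1.11}). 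The genuine gap is in your part (a), at the sentence ``each form supplies $O_\epsilon(|m|^\epsilon)$ of them by the Thue-equation bound''. No citable Thue-type count delivers this in the uniformity you need: the degrees run up to $d'\asymp\log|m|$, and Bombieri--Schmidt-type bounds $\ll d'^{\,1+\omega(m)}$ become $|m|^{1+o(1)}$ when $\omega(m)\asymp\log|m|/\log\log|m|$ (moreover $aX^{d'}+bY^{d'}$ need not be irreducible). What your linear-forms step actually provides is only the range bound $X\le |m|^{2/d'}$, and counting over that range --- one variable free, the other determined up to sign, which is all that Lemma \ref{Lemme:Facile} does --- yields $O(|m|^{2/d+\epsilon})$ for the opposite-sign solutions with $\min\{|ax^{d'}|,|by^{d'}|\}$ much larger than $|m|$, not $O(|m|^{(1/d)+\epsilon})$. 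So your argument proves finiteness and a bound, but not the stated exponent. You should also know that the paper's own proof has the same ceiling: it deduces Theorem \ref{Th:casGeneral}(a) from Theorem \ref{Th:majorationasymptotique}(a), and for $\eta=2^{-81}3^{-16}$ condition \eqref{Equation:mu} forces $\lambda>3$, so that route gives exponent $\lambda/(2d)+\epsilon>3/(2d)$. Reaching $(1/d)+\epsilon$ would require an extra idea present in neither text, e.g.\ splitting each form's solutions into small ones ($\min\{|ax^{d'}|,|by^{d'}|\}\ll|m|$, counted crudely by $O(|m|^{1/d'})$) and large ones controlled by a gap principle.

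Two secondary defects. First, your lower bound $\log|\Lambda|\ge -C\log\max\{|a|,|b|\}\cdot\log|y|\cdot\log d'$ is vacuous (indeed false) when $\max\{|a|,|b|\}=1$, i.e.\ for the pair $(a,b)=(1,-1)$, which the hypotheses allow; the correct statements require $\log A_j\ge\max\{\rmh(a_j),1\}$, which is precisely why Corollary \ref{Cor:axd+byd} assumes $\calA\ge 2$ and why the paper treats $a=1$, $b=-1$ separately, via the factorization $x^{d'}-y^{d'}=(x-y)(x^{d'-1}+\cdots+y^{d'-1})$ and a divisor count. Second, in (b), summing $A_F N^{2/d'}$ over \emph{all} degrees $d'\ge d^\dag$ against $\sharp\calE_{d'}=\exp(O(d'/\log d'))$ forms gives $N^{2/d^\dag+o(1)}$, and the factor $N^{o(1)}$ cannot be absorbed when $2/d^\dag\ge \vartheta_d+\epsilon$, since the claimed error term is then a genuine $O(N^{2/d^\dag})$; the proof of Theorem \ref{Th:VarianteTheorem1.11} avoids this by cutting the degrees at a threshold $d_0(\epsilon)$, below which only finitely many forms occur and above which condition (ii) of Definition \ref{Definition:reguliere} supplies the bound $N^\epsilon$ with $\epsilon<2/d^\dag$. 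Both of these are repairable, but the part (a) count is not, without a genuinely new ingredient.
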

   
  We will prove the result with the choice
$$
     \eta=2^{-81}3^{-16},
$$
 corresponding to the right--hand side  of   \eqref{Equation:mu} for $\lambda=3$. 
 
 In both Theorems \ref{Th:casDefiniPositif} and  \ref{Th:casGeneral}, the proof of the bound  for  $
\sharp \calG_{\ge d}(m)
$ is based on the explicit estimate  \eqref{Equation:majorationRdm}. The fact that  $ \calG_{\ge d}(m)$ is finite for all $m\not\in\{-1,0,1\}$ is not a consequence of the bound for $\sharp \calR_{\ge d}(N)$ (see Example \ref{Exemple}). 

Compared to \cite{FW2}, our new tool is a lower bound for linear forms in logarithms; the finiteness of the number of representations of a given integer $m$ depends of this estimate. As we will show in section  \ref{Section:Conjectures}, the $abc$ Conjecture would give an estimate very close to what would be deduced from conjectures on linear forms in logarithms.
 
 \section{A new definition of a regular family}\label{section:reguliere}

\begin{definition}\label{Definition:reguliere}
We will say that an infinite set  $\calF$ of binary forms  with discriminants different from zero and with degrees  $\ge 3$ is  {\em regular } if there exists a positive integer  $A$    satisfying the following two conditions 
\\ 
$(\mathrm {i})$
Two forms of the family   $\calF$ are ${\rm GL}(2,\Q)$--isomorphic  if and only if they are equal,\\
$(\mathrm {ii})$
For all  $\epsilon>0$, there exist two positive integers  $N_0=N_0(\epsilon)$ and  $d_0=d_0(\epsilon)$ such that, for all  $N\ge N_0$, the number of integers   $m$ in the interval $[-N,N]$ for which there exists $d\in\Z$, $(x,y) \in \Z^2$ and $F\in \calF_d$ satisfying 
$$
d\ge d_0,\quad
 \max\{\vert x\vert, \vert y \vert\} \geq A\quad\hbox{and }\quad F(x,y)=m
$$ 
is bounded by  $N^\epsilon$. 
\end{definition}
Of course the symbol $\mathcal F_d$ above, denotes the subset of $ \mathcal F$ of forms with degree $d$.
 
For the truth of Theorem  \ref{Th:VarianteTheorem1.11} below, one cannot drop the parameter $A$, as one sees by considering the family of cyclotomic forms  \cite{FW1} where the  hypothesis  $(\mathrm {ii})$ is satisfied with $A=2$ but not with  $A=1$.

Recall the  definition 1.10 of a  $(A,A_1, d_0, d_1, \kappa)$--regular family as it is introduced in \cite{FW2}.

\begin{definition}\label{agreable} Let $A$, $A_1$, $d_0$, $d_1$ be integers and let $\kappa$ be a real number such that
$$ 
A\geq 1, \, A_1\geq 1, \,d_1\geq  d_0 \geq 0, \, 0< \kappa <A.
$$ 
 Let $\calF$ be a set of binary forms with integral coefficients and with discriminants different from zero.  We say that $\calF$ is {\em$(A,A_1, d_0, d_1, \kappa)$--regular} if it satisfies the following conditions:
 \begin{enumerate}[label=\upshape(\roman*), leftmargin=*, widest=iii]
\item
\label{it:1}
The set $\calF$ is infinite,
\item 
\label{it:2}
All the forms of $\calF$ have their degrees $\geq 3$,
\item 
 \label{it:3}
For all $d\geq 3$, we have  the inequality $\sharp \calF_d  \leq d^{A_1} $,  
\item 
\label{it:4}
Two forms of $\calF$ are isomorphic if and only if they are equal,
\item 
\label{it:5}
For any  $d\ge \max\{d_1,d_0+1\}$, the following holds
$$
\left. 
\begin{array}{ll}
F\in \calF_d ,   \\
(x,y) \in \Z^2 \text{ and }  F(x,y) \ne 0, \\
\max\{\vert x\vert, \vert y \vert\} \geq A,
\end{array}
\right\}
 \Rightarrow \max\{\vert x \vert, \vert y \vert \}\leq \kappa \left\vert F(x,y)\right\vert^{\frac 1{d-d_0}}. 
$$ 
\end{enumerate}
\end{definition}
These two definitions are not independent since we have
\begin{lemma}\label{lemma:ComparaisonRegularite}
If a family of binary forms is $(A,A_1, d_0, d_1, \kappa)$--regular  in the sense of Definition \ref{agreable}  then it is
also regular in the sense of Definition \ref{Definition:reguliere}.
\end{lemma}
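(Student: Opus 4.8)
The preliminary requirements of Definition~\ref{Definition:reguliere}---that $\calF$ be infinite, that its forms have degree $\geq 3$, and that their discriminants be nonzero---are precisely conditions $(\mathrm{i})$ and $(\mathrm{ii})$ of Definition~\ref{agreable} together with its standing hypothesis on the forms; likewise, condition $(\mathrm{i})$ of Definition~\ref{Definition:reguliere} is condition $(\mathrm{iv})$ of Definition~\ref{agreable}. Thus the entire content of the lemma is to derive condition $(\mathrm{ii})$ of Definition~\ref{Definition:reguliere} from the two quantitative properties $(\mathrm{iii})$ and $(\mathrm{v})$. The plan is to fix $\epsilon>0$, take for $A$ the parameter of Definition~\ref{agreable}, and bound the number of the relevant integers $m\in[-N,N]$ by counting, degree by degree, the triples $(F,x,y)$ that produce them.

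So let $m\in[-N,N]$ with $m\neq 0$ be represented as $m=F(x,y)$, where $F\in\calF_d$, $\max\{|x|,|y|\}\geq A$, and $d\geq D_0$ for a threshold $D_0$ to be fixed; we require from the outset $D_0\geq\max\{d_1,d_0+1\}$, so that condition $(\mathrm{v})$ is available. That condition gives
$$
\max\{|x|,|y|\}\leq \kappa\,|m|^{1/(d-d_0)}\leq \kappa\,N^{1/(d-d_0)}.
$$
Hence, for each admissible $d$, the point $(x,y)$ lies in a square of side $O\bigl(\kappa N^{1/(d-d_0)}\bigr)$ and so takes at most $\bigl(2\kappa N^{1/(d-d_0)}+1\bigr)^2$ values; combined with $\sharp\calF_d\leq d^{A_1}$ from condition $(\mathrm{iii})$, the number of integers $m$ produced in degree $d$ is at most $d^{A_1}\bigl(2\kappa N^{1/(d-d_0)}+1\bigr)^2$.

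The decisive additional input is an upper bound on $d$, and here the hypothesis $\kappa<A$ is used. Rewriting the displayed inequality as $|m|\geq\bigl(\max\{|x|,|y|\}/\kappa\bigr)^{d-d_0}\geq (A/\kappa)^{d-d_0}$ and invoking $A/\kappa>1$, the constraint $|m|\leq N$ forces
$$
d\leq d_0+\frac{\log N}{\log(A/\kappa)}=:D.
$$
Equivalently, as soon as $\kappa N^{1/(d-d_0)}<A$ the above square contains no integer point with $\max\{|x|,|y|\}\geq A$, so degrees $d>D$ contribute nothing. Summing the per-degree bound over $D_0\leq d\leq D$ and using $N^{1/(d-d_0)}\leq N^{1/(D_0-d_0)}$ as well as $d^{A_1}\leq D^{A_1}=O\bigl((\log N)^{A_1}\bigr)$, the total is
$$
\ll (\log N)^{A_1+1}\,N^{2/(D_0-d_0)},
$$
the extra factor $\log N$ counting the $O(\log N)$ admissible degrees. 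Choosing $D_0>\max\{d_1,\,d_0+1,\,d_0+4/\epsilon\}$ makes $2/(D_0-d_0)<\epsilon/2$, and since $(\log N)^{A_1+1}=o(N^{\epsilon/2})$ this quantity is $\leq N^{\epsilon}$ for all $N\geq N_0(\epsilon)$; adding the single possible contribution $m=0$ does not change this. Therefore condition $(\mathrm{ii})$ of Definition~\ref{Definition:reguliere} holds with $d_0(\epsilon)=D_0$ and this $N_0(\epsilon)$.

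I expect the one real obstacle to be the upper bound $d\leq D$, rather than the lattice-point count: without capping the degree, the series $\sum_d N^{2/(d-d_0)}$ would not be summable, as its terms tend to $1$. It is exactly the inequality $\kappa<A$ of Definition~\ref{agreable} that makes $|F(x,y)|$ grow like $(A/\kappa)^{d-d_0}$ on the admissible points, hence bounds $d$; this is also in line with the remark that the parameter $A$ cannot be omitted.
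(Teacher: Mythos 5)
Your proof is correct and follows essentially the same route as the paper: use condition $(\mathrm{v})$ together with $\kappa<A$ to bound both $\max\{|x|,|y|\}$ (by $\kappa N^{1/(d-d_0)}$) and the degree $d$ (by $d_0+\log N/\log(A/\kappa)$), then combine with the count $\sharp\calF_d\le d^{A_1}$ from condition $(\mathrm{iii})$ to get a total of $O\bigl(N^{2/(D_0-d_0)}(\log N)^{A_1+1}\bigr)\le N^\epsilon$ representable integers. The only differences are cosmetic (the paper writes the degree threshold as $\max\{d_1,d_0+1\}+d_2$ with $d_2>2/\epsilon$, and your explicit treatment of $m=0$ and of the matching of the definitions' preliminary clauses is slightly more careful than the paper's terse write-up).
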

 
\begin{proof}
Suppose that the family $\calF$ satisfies the condition  $(\mathrm {v})$ of Definition  \ref{agreable}. Let $\epsilon>0$, let  $N_0$ sufficiently large  and let $d_2>2/\epsilon$. We use $d_0$ and $d_1$ as in the Definition \ref{agreable}  and we replace  $d_0$ by  $\max\{d_1,d_0+1\}+d_2$ in the condition $(\mathrm {ii})$ of the Definition \ref{Definition:reguliere}. Let  $N\in\Z$, $d\in\Z$,  $m\in\Z$,  $(x,y) \in \Z^2$ and  $F\in \calF_d$ such that
$$
N\ge N_0,\quad
d\ge \max\{d_1,d_0+1\}+d_2,\quad  |m|\le N,\quad
  X\geq A\quad\hbox{and}\quad F(x,y)=m
$$ 
with  $X:=\max\{\vert x\vert, \vert y \vert\} $. 
From condition  $(\mathrm {v})$ in  Definition \ref{agreable},  we deduce 
$$
A^{d-d_0}\le X^{d-d_0}\le \kappa^{d-d_0}|m|\le  \kappa^{d-d_0}N.
$$
From these inequalities we deduce on the one hand
$$
(d-d_0)\log (A/\kappa)\le \log N,
$$
which is 
$$
d\le d_0+\frac{ \log N}{\log (A/\kappa)},
$$
and on the other hand
$$
X\le \kappa N^{1/(d-d_0)}\le \kappa N^{1/d_2}.
$$
The condition  $(\mathrm {iii})$ of the Definition  \ref{agreable}  states  that family  $\calF$ contains at most  $d^{A_1}$ forms with degree $d$.
One deduces that the number of  $(d, x,y,F)$ (such that $F(x,y)=m$ with degree $F$ equal  to $d$) and also the number of  $m$,   are  bounded by  $O(N^{2/d_2}(\log N)^{A_1+1})$. 
\end{proof}
 
\begin{example}\label{Exemple}
Let $(\ell_d)_{d\ge 3}$ be a sequence of positive integers. Let  $\calF$  be the family obtained by considering the sequence of binary forms   
 $F_d(X,Y)=(X-dY)^{2d}+\ell_dY^{2d}$. 
 We have the equalities  
 \begin{equation}\label{397}
 F_d(d,1)=\ell_d\quad\hbox{and}\quad  F_d(d-1,1)=F_d(d+1,1)=\ell_d+1.
 \end{equation}
 We then check that this family is regular, in the meaning of  Definition  \ref{Definition:reguliere} if and only if, when  $N$ tends to infinity, we have 
$$
\frac 1 {\log N} \log  \sharp
\Bigl\{
\{\ell_d \mid d\ge 3\}\cap [1,N]\Bigr\}
\to 0.
$$
Choosing  $(\ell_d)_{d\ge 3}$ to be the sequence
$(1,2,4,\; 1,2,4,8,\; 1,2,4,8,16,\; \dots)$
defined by the formula
$$
\ell_d=2^j\quad \hbox{ when } d=\frac {k(k+1)}2+j=1+2+\cdots+k+j,\quad k\ge 2, \quad 0\le j\le k,
$$
we obtain an example of a regular family   for which there exists an infinite set of integers $m$  with infinitely many representations under the form    $m=F_d(x,y)$. 

The family $\mathcal F$  is  regular in the meaning of   Definition \ref{agreable}  only if  
\begin{equation}\label{417}
\ell_d\ge (d/\kappa)^{d-d_0}
\end{equation}
for all   $d\ge \max\{d_1,d_0+1\}$. This follows from \eqref{397}. For instance the condition \eqref{417}  is not satisfied when the sequence $(\ell_d)_{d\ge 2}$ is bounded.
\end{example}
We now turn our attention to the statement of \cite[Theorem 1.11]{FW2} when one considers a family which satisfies the new notion of regularity.
The conclusion of Theorem 1.11 of our text  \cite{FW2} remains true when one replaces the assumption that the family is   $(A,A_1, d_0, d_1, \kappa)$--regular by the assumption that the family is regular in the meaning of   Definition \ref{Definition:reguliere}. Namely:

\begin{theorem}\label{Th:VarianteTheorem1.11}
Let $\calF$ be a  regular family of binary forms in the meaning of   Definition \ref{Definition:reguliere}. 
 Then for every $d\geq  3$  and every positive $\varepsilon$, the quantity
 $$
 \begin{aligned}
{\calR}_{\geq d}  \left (\calF, N, A\right) := \sharp\, \bigl\{ m: 0\leq \vert m \vert \leq N, \, \text{ there is }   \, F\in \calF \text{ with } \deg F \geq d
\\
\text{ and } (x,y)\in \Z^2 \text{ with } \max \{\vert x \vert, \vert y \vert\} \geq A, \text{ such that } F(x,y) =m
\bigr\}. 
\end{aligned}
$$
satisfies
$$
\calR_{\geq d} (\calF, N, A) = \left( \sum_{F\in \calF_d} A_F W_F \right)   \cdot N^{2/d} + O_{\calF,A,d, \varepsilon}\bigl( N^{\vartheta_d+\varepsilon} \bigr)  +   O_{\calF,A,d} \bigl( N^{2/d^\dag}\bigr), 
$$
uniformly as $N  \to\infty$.
\end{theorem}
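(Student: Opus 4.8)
The plan is to re-run the proof of \cite[Theorem 1.11]{FW2}, which establishes precisely this asymptotic formula under the stronger hypothesis that $\calF$ is $(A,A_1,d_0,d_1,\kappa)$--regular, and to check that the only use made there of that hypothesis, beyond condition $(\mathrm{iv})$ of Definition \ref{agreable} (which coincides with condition $(\mathrm{i})$ of Definition \ref{Definition:reguliere}), is a bound on the integers represented by forms of large degree; such a bound is now supplied directly by condition $(\mathrm{ii})$. Concretely, I would split the integers counted by $\calR_{\geq d}(\calF, N, A)$ according to the degree of a representing form: those represented by a form of $\calF_d$, and those represented by a form of degree $\ge d^\dag$. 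By inclusion--exclusion it is then enough to estimate the number of integers represented by $\calF_d$ and to bound the number represented by a form of degree $\ge d^\dag$, the overlap of the two being dominated by the latter.

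For the degree-$d$ term I would apply the single-form asymptotic of \cite[Theorem 1.1 and Corollary 1.3]{SX} to each $F\in\calF_d$: the number of $m$ with $|m|\le N$ of the form $F(x,y)$ with $\max\{|x|,|y|\}\ge A$ equals $A_F W_F\, N^{2/d}+O_{F,\varepsilon}(N^{\vartheta_d+\varepsilon})$, the constraint $\max\{|x|,|y|\}\ge A$ removing only $O(1)$ values. Summing over the finite set $\calF_d$ yields the main term $\bigl(\sum_{F\in\calF_d} A_F W_F\bigr) N^{2/d}$ together with an admissible error $O_{\calF,d,\varepsilon}(N^{\vartheta_d+\varepsilon})$. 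To pass from this sum with multiplicity to the cardinality in $\calR_{\ge d}$ one must discard the integers represented by two distinct forms of $\calF_d$; by condition $(\mathrm{i})$ any two such forms are ${\rm GL}(2,\Q)$--inequivalent, and the number of their common values in $[-N,N]$ is absorbed into the error term $O(N^{\vartheta_d+\varepsilon})$, exactly as in the proof of \cite[Theorem 1.11]{FW2}. This part of the argument is unchanged.

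The new point is the contribution of the forms of degree $\ge d^\dag$, where $\calF$ is infinite and the quantitative tail bound of \cite{FW2}, derived there from conditions $(\mathrm{iii})$ and $(\mathrm{v})$ of Definition \ref{agreable}, is no longer available. Given $\varepsilon$, I would fix a real $\varepsilon'$ with $0<\varepsilon'\le\vartheta_d$, let $d_1$ denote the degree threshold $d_0(\varepsilon')$ furnished by condition $(\mathrm{ii})$, and split the degrees $d'\ge d^\dag$ at $d_1$. For the finitely many degrees $d^\dag\le d'<d_1$, each form of degree $d'$ represents $O(N^{2/d'})$ integers of $[-N,N]$ by the upper bound of \cite{SX}; summing over these finitely many forms and degrees gives $O_{\calF,d}(N^{2/d^\dag})$, the exponent $d'=d^\dag$ dominating. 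For the degrees $d'\ge d_1$, condition $(\mathrm{ii})$ bounds in one stroke the total number of integers so represented (with $\max\{|x|,|y|\}\ge A$) by $N^{\varepsilon'}=O(N^{\vartheta_d+\varepsilon})$. Collecting the three contributions gives the asserted formula, uniformly as $N\to\infty$.

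The main obstacle is precisely this last step. In \cite{FW2} the infinitely many high-degree forms were controlled through the explicit inequality $(\mathrm{v})$ together with the polynomial bound $\sharp\calF_d\le d^{A_1}$ of $(\mathrm{iii})$, and the whole content of the present statement is that neither is assumed. The delicate verification is therefore that the purely qualitative bound $\le N^{\varepsilon'}$ of condition $(\mathrm{ii})$, valid simultaneously for all degrees $\ge d_1$, is exactly strong enough to close the estimate, and that the cut-off $d_1=d_0(\varepsilon')$ and the exponent $\varepsilon'$ can be chosen so that $N^{\varepsilon'}$ is genuinely dominated by $N^{\vartheta_d+\varepsilon}$; here one uses $\vartheta_d>0$. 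One must also keep track, for the finitely many intermediate degrees, of the finiteness of each $\calF_{d'}$, which is part of the standing set-up of the family and is automatic in the binomial applications, where $\calE_{d'}$ is finite.
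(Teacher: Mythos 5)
Your proposal is correct and follows essentially the same route as the paper: the paper's proof likewise re-runs \cite[Theorem 1.11]{FW2}, observes that conditions $(\mathrm{iii})$ and $(\mathrm{v})$ of Definition \ref{agreable} enter only to bound the contribution of forms of degree beyond $d^\dag$, and replaces that step by condition $(\mathrm{ii})$ of Definition \ref{Definition:reguliere}, applied (as you do) to the count of represented integers $m$ rather than of tuples $(n,F,x,y)$. Your cut-off at $d_1=d_0(\varepsilon')$ and the absorption of $N^{\varepsilon'}$ into $O\bigl(N^{\vartheta_d+\varepsilon}\bigr)$ make explicit what the paper leaves implicit, so no further comparison is needed.
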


We do not need the assumption $d\ge d_1$ which occurred in \cite[Theorem1.11]{FW2}. 
Notice that if a family does not satisfy the condition  $(\mathrm {ii})$ of the Definition  \ref{Definition:reguliere}, 
then it does not satisfy the conclusion of  Theorem \ref{Th:VarianteTheorem1.11} --- the condition $(\mathrm {ii})$ in Definition \ref{Definition:reguliere}  is essentially optimal. 

\begin{proof}[Proof of Theorem  \ref{Th:VarianteTheorem1.11}]
We use the notation
$$
\calZ_A = \Z^2\setminus \bigl( [-A, A]\times [-A,A]\bigr)
$$
introduced in \cite{FW2}.
The conditions   $(\mathrm {iii})$ and   $(\mathrm {v})$ of the Definition   \ref{agreable}  appear in \cite{FW2} when considering (3.5) and  (3.7) to show that the cardinality of the set  
$$
 \left\{ (n, F, x, y): n > d^\dag +d_0, F \in \calF_n, (x,y) \in \calZ_A,  \vert F(x,y) \vert \leq B\right\}
 $$
 is bounded by $
 o_\calF \left( B^{   2/{d^\dag}}\right)$.  Firstly we remark that it suffices to bound the cardinality of the set  $$
 \begin{aligned}
 \left\{m: 0\le m\le B, \right. \; &\hbox{ there exists } (n, F, x, y), \; 
 \\
 &
 \left.
 n > d^\dag +d_0, F \in \calF_n, (x,y) \in \calZ_A,  \vert F(x,y) \vert=m\right\}.
 \end{aligned}
 $$
 The claimed bound immediately follows  from the assumption  $(\mathrm {ii})$ of the Definition \ref{Definition:reguliere}. 
\end{proof}

The following lemma is easy. It will be used several times

\begin{lemma}\label{Lemme:Facile}
Let  $\theta>0$. 
We suppose that there exists $d_0\ge 3$ such that,     for  $m$ and  $d$ in  $\Z$ with   $|m|\ge 2$ and   $d\ge d_0$, the conditions $$
\hbox{
$d'\ge d$, $(a,b)\in \calE_{d'}$, $\max\{|x|,|y|\}\ge 2$  and  $m=ax^{d'}+by^{d'}$ }
$$
imply the inequality
$$
X^{d'}\le |m|^\theta
$$
with  $X:=\max\{|x|,|y|\}$. 
We also suppose that the condition  \eqref{Equation:CardEd} is satisfied. Then 
\\
(a) For every $m\in\Z\setminus\{-1,0,1\}$ and every $d\ge 3$, the set  $\calG_{\ge d}(m)$ is finite.  In addition  for
every $d\ge d_0$ and every $\epsilon>0$, we have, as $|m|\to\infty$, the upper bound 
$$
\sharp \calG_{\ge d}(m)=O(|m|^{(\theta +\epsilon)/d}).
$$ 
(b) For every  $d\ge d_0$ and every  $\epsilon>0$, there exists $N_0$ such that, for    $N\ge N_0$, we have the inequality
$$
\sharp \calR_{\ge d}(N)\le N^{(2\theta+\epsilon)/d}.
$$
\end{lemma}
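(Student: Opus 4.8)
The plan is to use the hypothesis $X^{d'}\le |m|^\theta$ twice over: once to bound the degree $d'$ and once to bound the size $X=\max\{|x|,|y|\}$ of any admissible solution, and then to sum a crude per-degree count against the subexponential growth of $\sharp\calE_{d'}$ furnished by \eqref{Equation:CardEd}.

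First I would fix $m$ with $|m|\ge 2$ and consider a quintuple $(d',a,b,x,y)\in\calG_{\ge d}(m)$ with $d\ge d_0$. Because $X\ge 2$, the hypothesis gives $2^{d'}\le X^{d'}\le |m|^\theta$, hence $d'\le \theta\log|m|/\log 2$, so only $O(\log|m|)$ degrees can occur; simultaneously it gives $X\le |m|^{\theta/d'}\le|m|^{\theta/d}$, confining $(x,y)$ to a box. The same two bounds hold, with $|m|$ replaced by $N$, for any $m\in\calR_{\ge d}(N)$ with $|m|\ge 2$ together with one of its representations.

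Next comes the per-degree count, and here (a) and (b) diverge by the expected factor $2$. For (a) I fix $d'$, a pair $(a,b)\in\calE_{d'}$, and the integer $x$ with $|x|\le|m|^{\theta/d'}$; since $b\ne 0$, the relation $by^{d'}=m-ax^{d'}$ determines $y^{d'}$, hence $y$ up to sign, giving at most two values. This produces $O(\sharp\calE_{d'}\,|m|^{\theta/d'})$ quintuples of degree $d'$. For (b) I bound $\sharp\calR_{\ge d}(N)$ by the total number of representing data (each admissible $m$ coming from at least one): now $(x,y)$ runs over the full box of side $N^{\theta/d'}$, yielding $O(\sharp\calE_{d'}\,N^{2\theta/d'})$; the extra factor in the exponent is precisely the cost of no longer being able to recover one coordinate from the other once $m$ is allowed to vary. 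To sum over $d'$, I invoke \eqref{Equation:CardEd}: for every $\delta>0$ there is $c_\delta$ with $\sharp\calE_{d'}\le c_\delta e^{\delta d'}$ for all $d'\ge 3$, and together with $d'\le\theta\log|m|/\log 2$ this gives $\sharp\calE_{d'}\le c_\delta|m|^{\delta\theta/\log 2}$. Each of the $O(\log|m|)$ terms is then $O(|m|^{\theta/d+\delta\theta/\log 2})$ in (a) (respectively $O(N^{2\theta/d+\delta\theta/\log 2})$ in (b)); choosing $\delta$ small in terms of $\epsilon$ and absorbing the logarithmic factor into $|m|^{\epsilon/2}$ (resp. $N^{\epsilon/(2d)}$) yields $\sharp\calG_{\ge d}(m)=O(|m|^{(\theta+\epsilon)/d})$ and $\sharp\calR_{\ge d}(N)\le N^{(2\theta+\epsilon)/d}$ for $N\ge N_0$, the finitely many $m\in\{-1,0,1\}$ being negligible in (b).

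It remains to prove unconditional finiteness in (a) for $3\le d<d_0$, where the hypothesis is unavailable below degree $d_0$. I would split $\calG_{\ge d}(m)$ into the degrees $d'\ge d_0$, already controlled above, and the finitely many degrees $d\le d'<d_0$; for each such $d'$ and each of the finitely many $(a,b)\in\calE_{d'}$, the equation $ax^{d'}+by^{d'}=m$ with $m\ne 0$ is a binomial Thue equation of degree $d'\ge 3$ and nonzero discriminant, hence has only finitely many integer solutions. The difficulty here is essentially bookkeeping rather than depth: the delicate points are keeping the exponent at $\theta/d$ rather than $2\theta/d$ in (a) by solving for one coordinate, and verifying that the a priori large factor $\sharp\calE_{d'}$ is forced below $|m|^\epsilon$ once the degree bound is combined with \eqref{Equation:CardEd}.
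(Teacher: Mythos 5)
Your proof is correct and follows essentially the same route as the paper: bound $d'$ by $\theta\log|m|/\log 2$ and $X$ by $|m|^{\theta/d}$ from the hypothesis, count per degree (recovering $y$ up to sign from $x$ in (a), counting the full $(x,y)$-box in (b)), and sum against $\sharp\calE_{d'}$ using \eqref{Equation:CardEd}. Your handling of the low degrees $3\le d'<d_0$ via finiteness of solutions of binomial Thue equations is in fact more explicit than the paper, which disposes of this case in one sentence by noting that $\bigcup_{d'<d_0}\calF_{d'}$ is finite and tacitly using the same Thue-type finiteness per form.
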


\begin{proof}
Let  $|m|\ge 2$, $d\ge d_0$ and $d'\ge d$. 
The inequalities 
$$
2^{d'}\le X^{d'}\le |m|^{\theta}
$$
imply 
$$
d'\le \frac{\theta \log |m|}{\log 2}
\quad\hbox{and }\quad X\le |m|^{\theta/d'}.
$$
The cardinality of the set $\calG_{\ge d}(m)$  is less than 
$$
4  |m|^{\theta/d} \sum_{d'=d}^{ \lfloor \frac {\theta\log |m|}{\log 2}\rfloor} \sharp\calE_{d'},
$$ 
since, when one unknown is fixed in the equation $m = ax^{d'} +by^{d'}$, the other unknown takes two values at most.
The fact that  $\calG_{\ge d}(m)$  is a finite set for  $d\ge 3$ and  $|m|\ge 2$ follows from 
  the fact that $\bigcup_{3\le d'<d}\calF_{d'}$ is also finite. 
Thus the  assertion  (a) is a consequence of  \eqref{Equation:CardEd}. Finally the assertion  (b) follows from 
\begin{equation}\label{Equation:majorationRdm}
\sharp \calR_{\ge d}(N)\le 4  N^{2\theta/d} \sum_{d'=d}^{ \lfloor \frac {\theta\log N}{\log 2}\rfloor} \sharp\calE_{d'}.
\end{equation}
\end{proof}
\bigskip

  \begin{proof}[Proof of Theorem \ref{Th:casDefiniPositif}]
  The equality  $ax^d+by^d=m$ with  $a$ and  $b>0$  and  $d\geq 4$ even  implies the inequality  $X^d \le m$.  Lemma  \ref{Lemme:Facile} applied with $\theta =1$ proves the part  (a) of Theorem \ref{Th:casDefiniPositif}. We also check  the  condition $(\mathrm {ii})$ in the Definition \ref{Definition:reguliere}  of a regular family for the value $A=2$.  To prove  the assertion  (b) it remains to apply  Theorem \ref{Th:VarianteTheorem1.11} since the item (i) of Definition  \ref{Definition:reguliere} is fulfilled by Corollary  \ref{nonisomorphic} below.
    \end{proof}

  \section{Isomorphisms between binomial binary forms and their automorphisms}\label{IsoAuto}We recall the action of the group of matrices ${\rm GL}(2, \Q)$ on the set ${\rm Bin}(d, \Q)$ of binary forms with degree $d$, with  rational coefficients and with non zero discriminant. If $F= F(X,Y)$ and $\gamma= \begin{pmatrix} a_1 &a_2\\ a_3& a_4 \end{pmatrix}$ respectively belong to ${\rm Bin}(d, \Q)$ and ${\rm GL}(2, \Q)$, we define
  $$
 ( F\circ \gamma) (X,Y) =F(a_1X+a_2 Y, a_3 X+a_4 Y).
  $$
  By definition, we say that the two forms $F$ and $G$ are isomorphic if and only if there exists $\gamma \in {\rm GL}(2, \Q)$ such that $F\circ \gamma =G.$ 
  The group of automorphisms of a form $F$ is
  $${\rm Aut} (F, \Q) = \{ \gamma \in {\rm GL}(2, \Q) : F \circ \gamma =F \}.
  $$
  \begin{proposition}\label{isobetweenbinforms}
  Let $d\geq 3$ and $a$, $b$, $a'$ and $b'$ be integers different from zero. Then the two binary forms $aX^d+bY^d$ and $a'X^d+b'Y^d$ are isomorphic if and only if  at least one of the  following two conditions hold
  \begin{enumerate}
  \item the ratios $a/a'$ and $b/b'$ are both $d$--th powers of a rational number,
  \item the ratios $a/b'$ and $b/a'$ are both $d$--th powers of a rational number.
  \end{enumerate}
  \end{proposition}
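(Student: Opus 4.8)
The plan is to prove both directions of the equivalence by reducing the isomorphism of the two binomial forms to an explicit description of the group $\mathrm{Aut}(F,\Q)$ for $F=aX^d+bY^d$, and then to track which $\gamma\in\mathrm{GL}(2,\Q)$ can send one binomial form to another. First I would dispose of the easy direction: if (1) holds, write $a/a'=\alpha^d$ and $b/b'=\beta^d$ with $\alpha,\beta\in\Q^\times$, and check directly that the diagonal matrix $\gamma=\mathrm{diag}(\alpha,\beta)$ satisfies $(a'X^d+b'Y^d)\circ\gamma = a'\alpha^d X^d + b'\beta^d Y^d = aX^d+bY^d$, giving an isomorphism. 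Similarly, if (2) holds with $a/b'=\alpha^d$ and $b/a'=\beta^d$, the anti-diagonal matrix $\gamma=\begin{pmatrix}0&\alpha\\ \beta&0\end{pmatrix}$ does the job. So the content is entirely in the converse.

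For the converse, suppose $\gamma=\begin{pmatrix}a_1&a_2\\ a_3&a_4\end{pmatrix}\in\mathrm{GL}(2,\Q)$ satisfies $(a'X^d+b'Y^d)\circ\gamma = aX^d+bY^d$. The key structural fact is that a binomial form $aX^d+bY^d$ factors over $\overline{\Q}$ as $a\prod_{k=0}^{d-1}(X-\zeta^k\rho Y)$, where $\zeta$ is a primitive $d$-th root of unity and $\rho$ is a fixed $d$-th root of $-b/a$; thus the linear factors (equivalently, the roots of $F(x,1)$) form a single orbit $\{\zeta^k\rho\}$ under multiplication by $\zeta$, together with the behaviour at infinity. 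An isomorphism $\gamma$ must carry the multiset of roots (as points in $\mathbb{P}^1(\overline{\Q})$) of one form to that of the other via the Möbius action $z\mapsto (a_4 z - a_2)/(-a_3 z + a_1)$ or its inverse. The plan is to exploit the highly symmetric geometry of these $d$ equally-spaced points: their cross-ratios and their symmetry group pin down $\gamma$ severely. Concretely, I would argue that any Möbius transformation permuting the roots $\{\zeta^k\rho'\}$ of $a'X^d+b'Y^d$ and mapping them to $\{\zeta^k\rho\}$ must either fix or swap the two distinguished points $0$ and $\infty$ (the fixed points of the cyclic symmetry $z\mapsto\zeta z$), since these are the unique two points fixed by the order-$d$ automorphism subgroup.

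This case analysis produces exactly two shapes for $\gamma$. If $\gamma$ fixes $\{0,\infty\}$ setwise by fixing each, it is diagonal, $\gamma=\mathrm{diag}(a_1,a_4)$, and substituting gives $a'a_1^d=a$ and $b'a_4^d=b$, so $a/a'=a_1^d$ and $b/b'=a_4^d$ are $d$-th powers, yielding (1). If instead $\gamma$ swaps $0$ and $\infty$, it is anti-diagonal, $\gamma=\begin{pmatrix}0&a_2\\ a_3&0\end{pmatrix}$, and substituting gives $a'a_3^d=b$ and $b'a_2^d=a$, so $a/b'=a_2^d$ and $b/a'=a_3^d$ are $d$-th powers, yielding (2). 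The main obstacle I anticipate is rigorously justifying that $\gamma$ must be diagonal or anti-diagonal, i.e.\ that no ``mixed'' $\gamma$ with all entries nonzero can map one binomial form to another; this is where the rigidity of the configuration of $d\ge 3$ roots must be used carefully. I expect the cleanest route is to compute $\mathrm{Aut}(aX^d+bY^d,\Q)$ first, show it consists precisely of the diagonal maps $\mathrm{diag}(\zeta^j,\zeta^k)$ and (when $a=\pm b$ up to the relevant $d$-th power condition) the anti-diagonal ones that lie in $\mathrm{GL}(2,\Q)$, and then observe that the set of isomorphisms between two given forms is a coset of this automorphism group; combining the coset structure with a single explicitly exhibited isomorphism forces every isomorphism into the diagonal/anti-diagonal shape, completing the argument.
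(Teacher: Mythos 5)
Your target dichotomy ($\gamma$ diagonal or anti-diagonal) is the right one, and the parts you actually carry out are correct: the easy direction via $\mathrm{diag}(\alpha,\beta)$ and an anti-diagonal matrix, and the extraction of conditions (1)/(2) by coefficient comparison once the shape of $\gamma$ is known. The genuine gap is exactly the step you flag as the ``main obstacle'': nothing in the proposal proves that the M\"obius map $\mu$ induced by $\gamma$ must fix or swap $0$ and $\infty$. The reason you give --- that $0,\infty$ are ``the unique two points fixed by the order-$d$ automorphism subgroup'' --- only says these points are distinguished relative to the particular subgroup $\langle\sigma'\rangle$, $\sigma'(z)=\zeta z$; what the argument needs is that \emph{every} order-$d$ M\"obius transformation preserving the root set $\{\zeta^k\rho'\}$ has fixed-point set $\{0,\infty\}$, equivalently that the full M\"obius stabilizer of these $d$ points has a unique cyclic subgroup of order $d$. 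That is true but requires proof, and it is the whole content of the proposition. One correct route: the $d\ge 3$ roots are concyclic (all of modulus $|\rho'|$), so any M\"obius map preserving the set preserves the circle through them (three points determine it) and hence respects the cyclic order up to reversal; since a M\"obius map fixing $d\ge 3$ points is the identity, the stabilizer acts faithfully as a subgroup of the dihedral group of order $2d$, and since it contains the rotations $z\mapsto\zeta^k z$ and the involution $z\mapsto(\rho')^2/z$ it is exactly dihedral; for $d\ge 3$ the unique cyclic subgroup of order $d$ of that dihedral group is its rotation subgroup. Then $\mu\langle\sigma\rangle\mu^{-1}=\langle\sigma'\rangle$, and taking fixed-point sets gives $\mu(\{0,\infty\})=\{0,\infty\}$.

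Your fallback route --- ``compute $\mathrm{Aut}(F,\Q)$ first, then use the coset structure together with a single explicitly exhibited isomorphism'' --- is circular as written, and also misidentifies the group: $\mathrm{diag}(\zeta^j,\zeta^k)$ is not rational for $d\ge 3$, and over $\Q$ no isomorphism between the two forms can be ``explicitly exhibited,'' since its existence is precisely the hypothesis of the converse and its shape is the unknown. The coset argument must be run over $\overline{\Q}$: there $\mathrm{diag}(\alpha,\beta)$ with $\alpha^d=a/a'$, $\beta^d=b/b'$ is always an isomorphism, the set of $\overline{\Q}$-isomorphisms is the coset of $\mathrm{Aut}(F,\overline{\Q})$ it generates, and once $\mathrm{Aut}(F,\overline{\Q})$ is known to consist only of diagonal and anti-diagonal matrices (the same rigidity fact as above), every rational isomorphism is diagonal or anti-diagonal and rationality of its entries yields (1) or (2). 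So your plan is repairable, but the repair \emph{is} the dihedral-rigidity lemma you postponed. By contrast, the paper's proof is purely algebraic and far cheaper: following \cite[Lemma 1.14]{FW2}, one expands $F_{a,b,d}\circ\gamma$ and uses the vanishing of the intermediate coefficients; the cases where some entry of $\gamma$ vanishes are handled as in that lemma, and in the remaining case $a_1a_2a_3a_4\ne 0$ two coefficient identities give $a_1/a_3=-(b/a)(a_4/a_2)^{d-1}$ and $(a_1/a_3)^2=-(b/a)(a_4/a_2)^{d-2}$, whose quotient forces $a_1/a_3=a_2/a_4$ and contradicts $\det\gamma\ne 0$. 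That short computation replaces all of the projective geometry and group theory in your plan.
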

  \begin{proof} The proof is an extension of the proof of \cite[Lemma 1.14]{FW2}
  which worked  under the restrictions  that  $d$ is an even integer and $a$, $b$, $a'$ and $b'$ are all positive. We quickly give the necessary modifications to obtain 
  Proposition \ref{isobetweenbinforms}.  Indeed the beginning of the proof of  \cite[Lemma 1.14]{FW2} does not require these restrictions. They are only used at the very last item of the proof where we appeal to them to  prove that   if  $\gamma = \begin{pmatrix} a_1 & a_2 \\ a_2 & a_4 \end{pmatrix}$ with $a_1a_2a_3a_4\ne 0$, the equality
  $$F_{a,b,d} \circ \gamma =F_{a',b',d}$$
  cannot hold. Indeed, if it holds, a computation  leads to the equalities
  $$
  \frac {a_1}{a_3}= - \frac ba \left( \frac {a_4}{a_2}\right)^{d-1}, \   \left(\frac {a_1}{a_3}\right)^2= - \frac ba \left( \frac {a_4}{a_2}\right)^{d-2}.
  $$
  Dividing  the second  equality  by the first one we obtain the equality  $a_1/a_3 =a_2/a_4$. This is impossible since $\det \gamma\ne 0$.
  \end{proof}The following corollary is straightforward
  \begin{corollary}\label{nonisomorphic} Let $\mathcal F$ be a family of binomial forms
  $(F_{a,b,d})$ with $d\geq 3$, $(a,b)\in \mathcal E_d$, where $\mathcal E_d$ satisfies the conditions (C1) and (C2)
  of \S \ref{intro}.
  Then $\mathcal F$ satisfies the item (i) of Definition \ref{Definition:reguliere} and the item (iv) of Definition  \ref{agreable}.
  \end{corollary}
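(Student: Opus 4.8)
The plan is to read the statement directly off Proposition~\ref{isobetweenbinforms}, whose dichotomy is precisely complementary to the hypotheses (C1) and (C2).

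First I would observe that item (i) of Definition~\ref{Definition:reguliere} and item (iv) of Definition~\ref{agreable} assert the very same thing---that two forms of $\calF$ are ${\rm GL}(2,\Q)$--isomorphic if and only if they are equal---so it suffices to establish this single statement. Since a matrix $\gamma\in{\rm GL}(2,\Q)$ acts by a nondegenerate linear substitution of the variables, it preserves the degree of a form; hence two forms of distinct degrees are never isomorphic, and I may restrict attention to a fixed degree $d\ge 3$.

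Next, fix $d\ge 3$ and take $(a,b),(a',b')\in\calE_d$, and suppose $F_{a,b,d}$ and $F_{a',b',d}$ are isomorphic. By Proposition~\ref{isobetweenbinforms}, at least one of the two conditions holds: either (1) both $a/a'$ and $b/b'$ are $d$--th powers of a rational number, or (2) both $a/b'$ and $b/a'$ are $d$--th powers of a rational number. I would then argue by contradiction: if $(a,b)\ne(a',b')$, then hypothesis (C1) forces at least one of $a/a'$, $b/b'$ \emph{not} to be a $d$--th power, so alternative (1) fails, and hypothesis (C2) forces at least one of $a/b'$, $b/a'$ \emph{not} to be a $d$--th power, so alternative (2) fails. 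This contradicts Proposition~\ref{isobetweenbinforms}. Hence $(a,b)=(a',b')$, which means $F_{a,b,d}=F_{a',b',d}$ as polynomials. The converse implication is immediate, since equal forms are isomorphic via the identity matrix.

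There is essentially no obstacle here: all the content sits in Proposition~\ref{isobetweenbinforms}, and the corollary is exactly the observation that conditions (C1) and (C2) are built to negate, for distinct pairs, the two alternatives of that proposition. The only points deserving a line of care are the quantifier matching---(C1) and (C2) are hypotheses on \emph{every} pair of distinct $(a,b),(a',b')$ in $\calE_d$, which is just what the contradiction argument uses---together with the degree-preservation remark that reduces the problem to a single fixed degree.
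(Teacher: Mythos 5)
Your proof is correct and follows exactly the route the paper intends: the paper states the corollary as ``straightforward'' from Proposition~\ref{isobetweenbinforms}, and your argument---degree preservation under the ${\rm GL}(2,\Q)$ action to reduce to a fixed degree, then (C1) and (C2) negating the two alternatives of the proposition for distinct pairs---is precisely that straightforward deduction, spelled out. Nothing is missing; the identification of item (i) of Definition~\ref{Definition:reguliere} with item (iv) of Definition~\ref{agreable} and the converse via the identity matrix are exactly the right bookkeeping.
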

  We now recall the values of the constants $W_{F_{a,b,d}}$. More generally, for any binary form $F$, the constant $W_F$ is a rational number only depending on the group ${\rm Aut} (F, \Q)$, more precisely on  lattices defined by 
  some subgroups of ${\rm Aut} (F, \Q)$. The constant $W_F$  has a rather intricate definition but in the case of the binomial forms, the corresponding group of automorphisms is rather simple (see \cite[Lemma 3.3]{SX}). The following proposition is the first part of \cite[Corollary 1.3]{SX}.

  \begin{proposition} \label{valuesofWF} 
  Let $F_{a,b,d}(X,Y) =aX^d +bY^d$ be a binary binomial form with $ab\not= 0$ and with $d \geq 3$.  Then we have
    \begin{itemize}
  \item If $a/b$ is not a $d$--th power of a rational number, then
  $$W_{F_{a,b,d}}=
  \begin{cases}
 1 & \text {if $d$ is odd},\\
  1/4 & \text {if $d$ is even}.
  \end{cases}
  $$
  \item If $a/b$ is  a $d$--th power of a rational number say $a/b = (A/B)^d$  then 
   $$W_{F_{a,b,d}}=
  \begin{cases}
  1-1/(2\vert AB\vert) & \text {if $d$ is odd},\\
 (1-1/(2\vert AB\vert))/4& \text {if $d$ is even}.
  \end{cases}
  $$
  \end{itemize}
    \end{proposition}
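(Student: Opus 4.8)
The plan is to reduce the computation of $W_{F_{a,b,d}}$ to a determination of the automorphism group $\mathrm{Aut}(F_{a,b,d}, \Q)$, and then to feed this group into the general formula for $W_F$ recorded in \cite[Corollary 1.3]{SX}. Since, as noted above, $W_F$ depends only on the lattices attached to $\mathrm{Aut}(F, \Q)$, everything comes down to identifying these automorphisms explicitly.

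First I would compute $\mathrm{Aut}(F_{a,b,d}, \Q)$. Taking $a' = a$, $b' = b$ in Proposition \ref{isobetweenbinforms}, and reusing the argument at the end of its proof (which excludes matrices with all four entries nonzero), one sees that every automorphism is either diagonal or anti-diagonal. A diagonal matrix $\mathrm{diag}(\alpha, \delta)$ is an automorphism if and only if $\alpha^d = \delta^d = 1$ in $\Q$, that is $\alpha, \delta \in \{\pm 1\}$ when $d$ is even and $\alpha = \delta = 1$ when $d$ is odd. An anti-diagonal matrix $\begin{pmatrix} 0 & \beta \\ \gamma & 0 \end{pmatrix}$ is an automorphism if and only if $\gamma^d = a/b$ and $\beta^d = b/a$ have rational solutions, which happens exactly when $a/b$ is a $d$--th power $(A/B)^d$; then $\gamma = A/B$ and $\beta = B/A$, with two further signs available when $d$ is even. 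This is the content of \cite[Lemma 3.3]{SX}, and it splits the discussion into the four announced cases, with $\sharp \, \mathrm{Aut}(F_{a,b,d},\Q)$ equal to $1$, $4$, $2$, $8$ respectively.

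Next I would insert each group into the definition of $W_F$. The diagonal sign automorphisms lie in $\mathrm{GL}(2,\Z)$ and account for the global factor $1$ (for $d$ odd) against $1/4$ (for $d$ even), since a typical represented value is then hit by $1$ or $4$ integral automorphisms. The anti-diagonal involution $T = \begin{pmatrix} 0 & B/A \\ A/B & 0 \end{pmatrix}$ is the delicate term: it is not integral unless $|A| = |B| = 1$, and the sublattice $\{(x,y) \in \Z^2 : T(x,y) \in \Z^2\}$ has index $|AB|$. Counting integer points in the fundamental region up to the action of $T$ --- pairing $(x,y)$ with $T(x,y)$ and correcting for the fixed line $Ax = By$ --- should produce exactly the factor $1 - 1/(2|AB|)$, which multiplied by the diagonal factor gives the stated value in each of the two $d$--th power subcases. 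As a consistency check, setting $|AB| = 1$ makes $T$ integral and returns $1/2$ (resp.\ $1/8$) for $d$ odd (resp.\ even), matching $1/\sharp\,\mathrm{Aut}$ in those cases.

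The main obstacle is this last step: unwinding the intricate lattice--theoretic definition of $W_F$ from \cite{SX} and carrying out the index and orbit count for the rational anti-diagonal automorphism so that the correction comes out to be precisely $1 - 1/(2|AB|)$ rather than some other function of $|AB|$. Once the automorphism group is pinned down, however, the values are a direct specialization of \cite[Corollary 1.3]{SX}, so in practice I would verify that the four cases above agree with the cited corollary and record the values accordingly.
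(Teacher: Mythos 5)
Your proposal is correct and ends up taking essentially the same route as the paper: the paper offers no independent proof of Proposition \ref{valuesofWF}, stating only that it is the first part of \cite[Corollary 1.3]{SX}, and your argument --- after computing $\mathrm{Aut}(F_{a,b,d},\Q)$ in agreement with \cite[Lemma 3.3]{SX} --- ultimately defers the decisive lattice count to that same corollary. The automorphism-group and orbit-density sketch you add is a sound consistency check (and your four group orders and the limiting case $|AB|=1$ do match the stated values), but both your proof and the paper's rest on the citation rather than on that computation.
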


    \section{On the integers represented by binary binomial forms with large degree.}\label{S:OnTheIntegers}
    The following result gives an asymptotic upper bound for the number of integers represented by binary forms with high degree and for the number of representations of such integers.   
      
   \begin{theorem}\label{Th:majorationasymptotique} 
Let $d_0\geq 3$ be an integer. Let  $\lambda$ and  $\mu$ be two real numbers such that  $\lambda>2$ and  
\begin{equation}\label{Equation:mu}
 0<  \mu<2^{-81}3^{-15}\frac{\lambda-2}\lambda\cdotp
\end{equation}
We suppose that 
    $$
   \calE_d\subset \left\{(a,b)\in\Z\times\Z\,\; \mid \;  ab\not=0,\;  \max\{|a|,|b|\}\le \exp(\mu d/\log d)\right\}
    $$
 for all $d\ge d_0$.   Then 
 \\ 
(a) For every  $m\in\Z\setminus\{-1,0,1\}$ and every  $d\ge 3$, the set  $\calG_{\ge d}(m)$ is finite. Furthermore,   for every  $\epsilon>0$ and as   $|m|\to\infty$, we have  
$$
\sharp \calG_{\ge d}(m)=O\left(|m|^{\epsilon + \lambda /(2d)}\right).
$$ 
(b)
For every  $d\ge 3$, there exists  $N_0>0$ such that, for every  $N\ge N_0$, we have
$$
\sharp \calR_{\ge d}(N)\le N^{\lambda/d}.
$$    
   \end{theorem}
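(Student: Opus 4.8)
The plan is to reduce everything to Lemma~\ref{Lemme:Facile}. Since $\calE_d$ is contained in a box of side $2\exp(\mu d/\log d)$, we have $\sharp\calE_d\le 4\exp(2\mu d/\log d)$, so that $\frac1d\log(\sharp\calE_d+1)\to 0$ and hypothesis \eqref{Equation:CardEd} holds. It therefore suffices to produce a $d_0$ and a real number $\theta<\lambda/2$ such that, whenever $|m|\ge 2$, $d'\ge d\ge d_0$, $(a,b)\in\calE_{d'}$ and $m=ax^{d'}+by^{d'}$ with $X:=\max\{|x|,|y|\}\ge 2$, one has $X^{d'}\le |m|^{\theta}$. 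Parts (a) and (b) then follow from Lemma~\ref{Lemme:Facile}, since $(\theta+\epsilon)/d$ can be kept below $\lambda/(2d)+\epsilon$ and $2\theta+\epsilon$ below $\lambda$.

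So I fix such a representation and set $X=|x|\ge|y|$, the symmetric case being identical. Several cases are elementary. If $y=0$, then $|m|=|a|X^{d'}\ge X^{d'}$. If the two monomials do not almost cancel, say $|by^{d'}|\le\frac12|ax^{d'}|$ or they share the same sign, then $|m|\ge\frac12|ax^{d'}|\ge\frac12 X^{d'}$; in all these situations $X^{d'}\le 2|m|$. There remains the main case in which $ax^{d'}$ and $by^{d'}$ have opposite signs and comparable size; after disposing of $|y|=1$ (where $|a|X^{d'}=|m\mp b|\le|m|+|b|$, and one checks, bounding $d'$ via $2^{d'}\le|m|+\exp(\mu d'/\log d')$, that $|b|$ is a negligible power of $|m|$), we may assume $|x|\ge|y|\ge 2$.

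In this remaining case I would introduce the linear form in two logarithms
$$
\Lambda=d'\log\frac{|x|}{|y|}+\log\frac{|a|}{|b|},
$$
so that $|ax^{d'}/by^{d'}+1|=|m|/(|b|\,|y|^{d'})$ forces $|\Lambda|$ to be small and, by the mean value theorem, $|m|\asymp|ax^{d'}|\cdot|\Lambda|\ge X^{d'}|\Lambda|$, whence $X^{d'}\ll|m|/|\Lambda|$. (If $\Lambda=0$ the two monomials have equal modulus and $X^{d'}\le|m|$ directly.) Now I would apply an explicit lower bound for $|\Lambda|$ of Laurent--Mignotte--Nesterenko type, with $\alpha_1=|a|/|b|$, $\alpha_2=|x|/|y|$, coefficients $1$ and $d'$, and heights $h(\alpha_1)\le\log\max\{|a|,|b|\}\le \mu d'/\log d'$ and $h(\alpha_2)\le\log X$. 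The point of the shape $\exp(\mu d/\log d)$ is exactly that the factor $\log d'$ in the denominator of $h(\alpha_1)$ cancels the growth coming from the large coefficient $d'$: since $d'/h(\alpha_1)=\log d'/\mu$, the squared logarithmic factor of the bound stays of size $(\log\log d')^2$, and the resulting estimate reads $-\log|\Lambda|\le 2^{81}3^{15}\,\mu\,d'\log X$, the constant $2^{81}3^{15}$ absorbing the absolute constant of the logarithmic bound together with the bounded maximum of $(\log\log d')^2/\log d'$.

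Feeding this into $X^{d'}\ll |m|/|\Lambda|$ and taking logarithms gives $(1-2^{81}3^{15}\mu)\,d'\log X\le \log|m|+O(1)$, that is $X^{d'}\le C\,|m|^{\theta_0}$ with $\theta_0=(1-2^{81}3^{15}\mu)^{-1}$. By \eqref{Equation:mu} we have $2^{81}3^{15}\mu<(\lambda-2)/\lambda$, hence $1-2^{81}3^{15}\mu>2/\lambda$ and $\theta_0<\lambda/2$, so any $\theta$ with $\theta_0<\theta<\lambda/2$ works: the multiplicative constant $C$ is absorbed for $|m|$ large, while the finitely many small $|m|$, as well as the bounded range of $d'$ near $d_0$ where $h(\alpha_1)$ may fall below $1$, are handled directly by finiteness of the relevant set of forms. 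I expect the main obstacle to be precisely this last step: selecting the right explicit version of the two-logarithm bound and carrying the bookkeeping of constants so that the effective constant is exactly $2^{81}3^{15}$ and the exponent lands strictly below $\lambda/2$, uniformly in $d'\ge d$.
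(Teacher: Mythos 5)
Your overall strategy is the paper's: reduce to Lemma~\ref{Lemme:Facile} by proving a pointwise inequality $X^{d'}\le |m|^{\theta}$ with $\theta<\lambda/2$, and obtain it from a lower bound for the linear form in the two logarithms $\log(x/y)$ and $\log(a/b)$; even your arithmetic identifying $2^{81}3^{15}$ with $4C$ and turning \eqref{Equation:mu} into $1-4C\mu>2/\lambda$ is exactly the paper's. But two points in your main case are genuine gaps, not bookkeeping.

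First, the case $\calA:=\max\{|a|,|b|\}=1$, i.e.\ the forms $\pm(X^{d'}-Y^{d'})$, which the hypothesis permits for \emph{every} $d'\ge d_0$, is not covered by your argument. Every explicit two-logarithm estimate (Laurent--Mignotte--Nesterenko as well as Proposition~\ref{Prop:9.22}) requires its height parameters to be at least an absolute constant (in Proposition~\ref{Prop:9.22}, $\log A_j\ge 1$); you may not feed in $h(\alpha_1)\le\mu d'/\log d'$ when this quantity is below that threshold, and when $\calA=1$ nothing forces $d'$ to be large (contrast with $\calA\ge 2$, where $\log 2\le \mu d'/\log d'$ forces $d'$ to be astronomically large). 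So your claimed estimate $-\log|\Lambda|\le 2^{81}3^{15}\,\mu\,d'\log X$ is simply false for, say, $(a,b)=(1,-1)$ and moderate $d'$: the true output of the two-log bound in that regime is of size $(\log d')^2\log X$ times an absolute constant, useless for $d'$ below a huge threshold. This is why the paper treats $\calA=1$ separately and elementarily: $|x^{d'}-y^{d'}|=|x-y|\,|x^{d'-1}+\cdots+y^{d'-1}|>X^{d'-1}\ge X^{2d'/\lambda'}$ once $d'\ge\lambda'/(\lambda'-2)$, while for the finitely many smaller degrees the representations are counted by a divisor argument ($x-y$ divides $m$, giving $O_{d}(|m|^{\epsilon})$ solutions). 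Your proof needs this supplement, and note that Corollary~\ref{Cor:axd+byd} itself explicitly excludes $\calA=1$.

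Second, the obstacle you flagged at the end is real, and the paper resolves it precisely by \emph{not} using an LMN-shaped bound. An LMN-type estimate has the form $C'(\log b')^2\log A_1\log A_2$ with $b'\approx d'/\log A_1=\log d'/\mu$, so the squared factor is $(\log\log d'+\log(1/\mu))^2$, not $(\log\log d')^2$ as you wrote; since \eqref{Equation:mu} forces $\log(1/\mu)\ge 81\log 2+15\log 3$, and forces it to be at least $\log\bigl(\lambda/(\lambda-2)\bigr)$ as $\lambda\to 2^{+}$, the ratio $(\log b')^2/\log d'$ is not bounded by an absolute constant uniformly over the admissible range, and the stated threshold $2^{-81}3^{-15}(\lambda-2)/\lambda$ cannot be recovered this way for all $\lambda>2$. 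The paper instead uses Proposition~\ref{Prop:9.22} ([W, Cor.~9.22]), whose bound is $\exp\{-C(\log B)(\log A_1)(\log A_2)\}$ with a \emph{single} factor $\log B=\log d'$ and $C=2^{79}3^{15}$; Corollary~\ref{Cor:axd+byd} then gives \eqref{Equation:diophantienne}, with correction term $4C(\log d')(\log X)(\log\calA)\le 4C\mu\,d'\log X$ coming directly from the hypothesis, the constant $4C=2^{81}3^{15}$ exact and uniform in $d'$, $\mu$, $\lambda$, and with no multiplicative constants to absorb (so Lemma~\ref{Lemme:Facile} applies verbatim, without your detour through $X^{d'}\le C|m|^{\theta_0}$ and exceptional small $|m|$). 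With that tool in place of LMN, and the $\calA=1$ case added, your remaining steps do go through.
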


The set of  $m$ that we are considering in the assertion (b)  contains the set of $m$ for which the hypotheses are satisfied with  $(a,b)\in\calE_d$. By the result of  \cite{SX} (see \cite{BDW} for the particular case of binary binomial forms), each of these forms with degree  $d$ contributes to this number of  $m$ by $N^{2/d}$, up to some positive constant. The hypothesis   $\lambda>2$ is thus natural.
     
   One cannot drop the condition  $a\ne 0$. Indeed, if $a=0$, every    $m\le N$ is represented by some form   (take $y=1$, $b=m$ and $d$ sufficiently large). Similarly, one cannot drop the condition $b\ne 0$. 
 
One cannot forget the hypothesis $ \max\{|x|,|y|\}\ge 2$: take  $x=1$ and $y=-1$, then every integer  $m$ in the interval $1\le m\le N$ satisfies the equality $m=a-b$ with  $d$, $a$, $b$ satisfying the conditions of Theorem   \ref{Th:majorationasymptotique}.  
   
One cannot replace the condition $\max\{|a|,|b|\}\le \exp(\mu d/\log d)$ by \break  $\max\{|a|,|b|\}\le 2^d$, as it can be seen by the  example $x=2$, $y=a=1$, $b=m-2^d$, $m=1,\dots,2^d-1$.  In \S \ref{Section:Conjectures}  we will see  
to what extent one can hope to weaken this hypothesis by assuming either Conjecture 1 of  \cite[p. 212]{Lang}  or Conjecture $abc$. In this connection, in  
  \cite[Theorem 1.13]{FW2}, there is no hypothesis concerning  $\max \{|a|,b|\}$ when  $(a,b)$ is  in the set $\calE_d$: the only condition deals with the number of elements which must be less than $d^{A_1}$ for   \cite[Theorem 1.13]{FW2}, and must satisfy condition \eqref{Equation:CardEd} for Theorem \ref{Th:casDefiniPositif}. The example of the family $X^d+(d-2^d)Y^d$  shows that such a result cannot be extended to the case where the binary forms has real zeroes.    
  \section{A diophantine result}\label{S:ResultatDiophantien}

The central tool in the proof of  Theorem \ref{Th:majorationasymptotique} is a lower bound coming from the theory of linear forms in logarithms, more precisely \cite[Corollary 9.22]{W}.  The usual height of the rational number  $p/q$, written under its irreducible form, is defined by 
$\rmH(p/q)=\max\{|p|,q\}$ and its logarithmic height is
$$
\rmh(p/q)=\log \rmH(p/q)= \log\max\{|p|,q\}.
$$

\medskip
\begin{proposition} \label{Prop:9.22}
Let  $a_1, a_2$ be rational numbers, $b_1,b_2$ be positive integers, $A_1,A_2,B$ be real positive numbers. We suppose  for  $j=1,2$ that  
$$
B\ge \max \{\rme,\; b_1,\; b_2\},
\quad 
\log A_j\ge \max\{\rmh(a_j),\;  1\}.
$$
Then, if  $ a_1^{b_1}a_2^{b_2}\not=1$, we have the inequality 
$$ 
|a_1^{b_1}a_2^{b_2}-1|\ge \exp\left\{-C(\log B)(\log A_1)(\log A_2)\right\}
$$ 
with  $C=2^{79}3^{15}$. 
\end{proposition}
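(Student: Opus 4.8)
The plan is to obtain Proposition~\ref{Prop:9.22} as a specialization of the lower bound for linear forms in two logarithms recorded in \cite[Corollary 9.22]{W}, after reducing the multiplicative quantity $a_1^{b_1}a_2^{b_2}-1$ to a genuine real linear form over $\Q$. The real content is the transcendence estimate itself, so the work on our side is only the two reductions and the tracking of the constant.

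First I would settle the sign. Because $B\ge \rme$ and $\log A_j\ge 1$, the exponent $C(\log B)(\log A_1)(\log A_2)$ is bounded below by $C>0$, so the right-hand side of the claimed inequality is strictly less than $1$. Hence if $a_1^{b_1}a_2^{b_2}<0$ then $|a_1^{b_1}a_2^{b_2}-1|\ge 1$ and there is nothing to prove, while $a_1^{b_1}a_2^{b_2}=0$ is excluded since $a_1a_2\ne 0$. So I may assume $a_1^{b_1}a_2^{b_2}>0$, in which case $a_1^{b_1}a_2^{b_2}=|a_1|^{b_1}|a_2|^{b_2}$; since $\rmh(a_j)=\rmh(|a_j|)$, replacing $a_j$ by $|a_j|$ alters no hypothesis and lets me assume $a_1,a_2>0$. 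Writing
$$
\Lambda=b_1\log a_1+b_2\log a_2\in\R,
$$
we have $a_1^{b_1}a_2^{b_2}=\rme^{\Lambda}$, and the assumption $a_1^{b_1}a_2^{b_2}\ne 1$ becomes $\Lambda\ne 0$.

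The heart of the matter is a single invocation of \cite[Corollary 9.22]{W}, applied over the field $\Q$ (so the degree parameter equals $1$) with its free auxiliary parameter chosen so that the estimate collapses to the product shape $(\log B)(\log A_1)(\log A_2)$; the normalizations $\log A_j\ge\max\{\rmh(a_j),1\}$ and $B\ge\max\{\rme,b_1,b_2\}$ are precisely what make the various maxima and lower-order terms of that statement disappear. If the cited corollary is already phrased for the multiplicative quantity $|a_1^{b_1}a_2^{b_2}-1|$, this yields the desired bound directly with $C=2^{79}3^{15}$, possibly after rewriting $\Lambda$ in the normalized shape $b_2\log a_2-b_1\log a_1$ used in \cite{W} (which merely permutes the two indices). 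If instead it is stated additively, as a lower bound $|\Lambda|\ge \exp\bigl\{-C_0(\log B)(\log A_1)(\log A_2)\bigr\}$, I would convert it using that for $|\Lambda|\ge 1$ one has $|\rme^{\Lambda}-1|\ge 1-\rme^{-1}$, which already dominates the right-hand side since $(\log B)(\log A_1)(\log A_2)\ge 1$, while for $|\Lambda|<1$ the elementary bound $|\rme^{\Lambda}-1|\ge \rme^{-1}|\Lambda|$ transfers the estimate up to a factor $\rme^{-1}$.

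The delicate point, and essentially the only work beyond citing \cite{W}, is the constant bookkeeping. The passage from $|\Lambda|$ to $|\rme^{\Lambda}-1|$, the case distinction on the size of $|\Lambda|$, and the reduction to positive $a_j$ each cost at most an additive amount in the exponent; one must check that these are absorbed---using once more that $(\log B)(\log A_1)(\log A_2)\ge 1$, so that an additive loss is dominated by a negligible increase in the multiplicative constant---and that the value delivered by the specialized corollary is arranged to land on exactly the stated $C=2^{79}3^{15}$ rather than something marginally larger. This is also why the hypotheses are normalized with the explicit lower bounds $\rme$ and $1$: they guarantee the product of logarithms is at least $1$, which is exactly the hook on which the constant tracking hangs.
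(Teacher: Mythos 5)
Your proposal is correct and takes essentially the same route as the paper: the paper's entire proof is the one-line specialization of \cite[Corollary 9.22]{W} with $D=1$, $m=2$, $\alpha_1=a_1$, $\alpha_2=a_2$ and the constant $C(m)$ of \cite[p.~252]{W}, which is exactly your central step. Your preliminary reduction to positive $a_j$ and the additive-to-multiplicative conversion you keep in reserve are harmless elaborations of that citation; the sign reduction is in fact a useful supplement, since the proposition is later applied with negative rationals (e.g.\ $a_2=-a/b$ in Corollary \ref{Cor:axd+byd}).
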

\medskip

This lower bound follows from Corollary 9.22  in  \cite[p. 308]{W} by taking 
$$
D=1, \quad  m=2, \quad \alpha_1=a_1, \quad \alpha_2= a_2
$$
and the constant  $C(m)$ defined in \cite[p. 252]{W}.

\begin{corollary}\label{Cor:axd+byd}
   Let $d$, $a$, $b$, $x$ and $y$ be rational integers. Let 
   $$
   \calA:=\max\{|a|,|b|\}, \quad X:=\max\{|x|,|y|\}.
   $$
   We suppose  $d\ge 2$, $\calA\ge 2$,  $X\ge 2$ and $ax^d+by^d\not=0$. 
   Then we have the lower bound
   $$
   |ax^d+by^d|\ge \max\{ |ax^d|, |b y^d| \} \exp\{-4C  (\log d)(\log X)(\log \calA)\}.
   $$
\end{corollary}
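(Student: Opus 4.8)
The plan is to factor out the dominant of the two monomials and recognize what remains as a linear form in two logarithms, to which Proposition~\ref{Prop:9.22} applies directly. First I would dispose of the degenerate cases: if any one of $a,b,x,y$ vanishes, then $ax^d+by^d$ reduces to the surviving monomial, which is exactly $\max\{|ax^d|,|by^d|\}$, and the asserted inequality holds trivially because the exponential factor never exceeds $1$. Hence I may assume $abxy\neq 0$, and by the symmetry $(a,x)\leftrightarrow(b,y)$ (under which $\calA$, $X$ and both sides of the claim are invariant) I may assume $|ax^d|\ge|by^d|$. Factoring out the dominant monomial gives
$$
|ax^d+by^d|=|ax^d|\,\bigl|1+\Lambda\bigr|=\max\{|ax^d|,|by^d|\}\,\bigl|1+\Lambda\bigr|,\qquad \Lambda:=\frac{by^d}{ax^d},
$$
so it remains only to bound $|1+\Lambda|$ from below.

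Next I would cast $1+\Lambda$ in the form required by Proposition~\ref{Prop:9.22}. Set
$$
a_1=-\frac{b}{a},\qquad a_2=\frac{y}{x},\qquad b_1=1,\qquad b_2=d,
$$
so that $a_1^{b_1}a_2^{b_2}=-by^d/(ax^d)=-\Lambda$ and hence $|a_1^{b_1}a_2^{b_2}-1|=|1+\Lambda|$. The hypothesis $ax^d+by^d\neq 0$ says exactly that $\Lambda\neq-1$, i.e.\ $a_1^{b_1}a_2^{b_2}\neq 1$, so the nonvanishing assumption of the proposition is satisfied; moreover $b_1=1$ and $b_2=d\ge 2$ are positive integers, and I may take $B=\max\{\rme,d\}$.

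The main obstacle --- really the only delicate point --- is the bookkeeping of heights and constants needed to land exactly on the factor $4C$. For the height inputs I would use that, written in lowest terms, $b/a$ and $y/x$ have numerator and denominator bounded by $\calA$ and $X$ respectively, whence $\rmh(-b/a)\le\log\calA$ and $\rmh(y/x)\le\log X$; admissible choices are therefore $\log A_1=\max\{\log\calA,1\}$, $\log A_2=\max\{\log X,1\}$ and $\log B=\max\{1,\log d\}$. The elementary inequality $\max\{\log t,1\}\le(\log t)/\log 2$, valid for every real $t\ge 2$, applied to $t=\calA,X,d$, then gives
$$
(\log B)(\log A_1)(\log A_2)\le\frac{1}{(\log 2)^3}\,(\log d)(\log X)(\log\calA)\le 4\,(\log d)(\log X)(\log\calA),
$$
the final inequality because $(\log 2)^{-3}=3.00\ldots<4$. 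Substituting into Proposition~\ref{Prop:9.22} yields $|1+\Lambda|\ge\exp\{-4C(\log d)(\log X)(\log\calA)\}$, and multiplying through by $\max\{|ax^d|,|by^d|\}$ completes the proof.
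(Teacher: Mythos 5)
Your proof is correct and follows essentially the same route as the paper's: both apply Proposition~\ref{Prop:9.22} to the ratio of the two monomials (you take $a_1=-b/a$, $a_2=y/x$, $b_2=d$ with the larger monomial factored out, while the paper symmetrically takes $a_1=x/y$, $a_2=-a/b$, $b_1=d$), and both absorb the edge cases $d,X,\calA=2$ via the bound $1/(\log 2)^3<4$. Your explicit treatment of the degenerate cases $abxy=0$ and the inequality $\max\{\log t,1\}\le(\log t)/\log 2$ are just slightly more detailed versions of steps the paper leaves implicit.
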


The conclusion is obviously false when one of the parameters $d$, $X$, $\calA$ equals $1$.

\begin{proof} 
By symmetry one can suppose that  $|ax^d|\le  |b y^d| $. 
 We use Proposition \ref{Prop:9.22} with
$$
b_1=d,\quad b_2=1,\quad a_1=\frac x y, \quad a_2=-\frac a b,
$$
$$
B=  
\begin{cases} d
 & \hbox{if $d\ge 3$}
 \\
 \rme 
 & \hbox{if $d= 2$,}
 \end{cases}
 \quad
 A_1=  
 \begin{cases} X
 & \hbox{if $X\ge 3$}
  \\
 \rme 
 & \hbox{if $X= 2$,}
 \end{cases} 
 \quad   A_2= 
 \begin{cases} \calA 
 & \hbox{if $\calA\ge 3$}
 \\
 \rme 
 & \hbox{if $\calA= 2$.}
 \end{cases}
$$
We conclude  using the inequality $1/(\log 2)^3 <4$.
\end{proof}

Corollary  \ref{Cor:axd+byd}  implies the lower bound
$$
   |ax^d+by^d|\ge X^d  \exp\{-4C  (\log d)(\log X)(\log \calA)\},
   $$
that we write as  
\begin{equation}\label{Equation:diophantienne}
   |ax^d+by^d|\ge X^{d  -4C(\log d)(\log \calA)}.
\end{equation}
 \section{Proof of Theorems  \ref{Th:majorationasymptotique} and \ref{Th:casGeneral}}

   \begin{proof}[Proof of Theorem \ref{Th:majorationasymptotique}] 
    Let  $\lambda'$ in the interval  $2<\lambda'<\lambda$ such that 
   $$ 
     \mu=\frac {\lambda'-2}{4C\lambda'},
   $$
 where $C$ is defined in Proposition \ref{Prop:9.22}.   Let  $m=ax^{d'}+by^{d'}$ with $|m|\ge 2$, $d'\ge d$, $(a,b)\in \calE_{d'}$ and  $X\ge 2$. 
    When $ax^{d'}$ and  $by^{d'}$ have the same sign,  we have  $|m|\ge X^{d'}$  and we use Lemma \ref{Lemme:Facile} with $\theta=1$.
     When $ax^{d'}$ and $by^{d'}$ have opposite signs, in order  to use Lemma  \ref{Lemme:Facile},  we can suppose that  $m\ge 2$, $a,x,y >0$  and  $b<0$. 
       
   We are firstly interested in the pairs  $(a,b)\in\bigcup_{d'\ge d} \calE_{d'}$ satisfying \break  $\max\{|a|,|b|\}=1$. By our hypotheses, we have  $a=1$ and  $b=-1$.  There is no restriction to suppose  that $d\ge \lambda'/(\lambda'-2)$, since when $m\ne 0$ is given, the equation $x^d-y^d=m$ has at most $(d-1)\cdot \sharp\{ k\mid k\vert m\}= O_d (\vert m \vert^\epsilon)$ solutions.  For $d'\ge d$, we write 
   $$
   m=x^{d'}-y^{d'}=(x-y)(x^{d'-1}+x^{d'-2}y+\cdots+xy^{d'-2}+y^{d'-1}) > X^{d'-1}\ge X^{2d'/\lambda'}.
   $$
   Thus we can use Lemma  \ref{Lemme:Facile} with $\theta=\lambda'/2$.    

 We now consider the pairs  $(a,b)\in\bigcup_{d'\ge d} \calE_{d'}$ such that $\calA:=\max\{|a|,|b|\}$ satisfies  $ \calA\ge 2$. Since we supposed that $ \calA\le \exp(\mu d'/\log d')$,  we have
   $$
     \frac {d'} {(\log d')(\log \calA)} \ge \frac 1 \mu = \frac {4C\lambda'}{\lambda'-2}\cdotp
   $$ 
Let  $X:=\max\{|x|,|y|\}$.  We deduce from  \eqref{Equation:diophantienne} the inequality
   $$
   X^{d'  -4C(\log d')(\log \calA)} \le m 
   $$
   with
   $$
   d'-4C(\log d')(\log \calA)\ge d'(1-4C\mu)= \frac {2d'}{\lambda'},
   $$
    which allows to use Lemma \ref{Lemme:Facile} with the choice $\theta=\lambda' /2$.

To conclude, we add the three values  $m=0$  and $m=\pm1$. 
\end{proof}

\begin{proof} [Proof of Theorem \ref{Th:casGeneral}] It mimics the proof of Theorem \ref{Th:casDefiniPositif} in Section \ref{section:reguliere}: 
combining 
Corollary \ref{nonisomorphic} and Theorem \ref{Th:majorationasymptotique} (b), one deduces that the family  $\calF$ satisfies the definition \ref{Definition:reguliere} of a regular family. 
\end{proof}

  \section{Conjectures}\label{Section:Conjectures}
Let $X_0$  be an integer   $\ge 2$. we introduce the following subset of  $\calR_{\ge d}$:
$$
\begin{aligned} 
\calR_{\ge d,X_0}=\Bigl\{m\in\Z\; \mid\; &
\hbox { there exists }
(d',a,b,x,y)\;
\hbox{ such that  } m=ax^{d'}+by^{d'}\; \\ \hbox{ with } 
& d'\ge d,\;  (a,b)\in\calE_{d'}, \; (x,y)\in\Z^2 \; \hbox{ and } 
   \max\{|x|,|y|\}\ge X_0 \Big\},
  \end{aligned}
  $$ 
 such that   $
\calR_{\ge d} =\calR_{\ge d,2}$.
For $N$ a positive integer   we also denote  
 $$
\calR_{\ge d,X_0}(N)=\calR_{\ge d,X_0}\cap[-N,N].
 $$
 
After the statement of Theorem \ref{Th:majorationasymptotique} in Section \ref{S:OnTheIntegers}, we gave the example of the equation $2^d-b=m$ to show that one cannot replace the condition $\max\{|a|,|b|\}\le \exp(\mu d/\log d)$ by $\max\{|a|,|b|\}\le 2^d$. Introducing a parameter $X_0$ and assuming  $\max\{|x|,|y|\}\ge X_0$, one might expect a more general result to hold. It is interesting to notice that a conjecture on lower bounds for linear forms in logarithms and the $abc$ Conjecture would produce very similar results.  
  
  \subsection{Conjecture 1 of \cite{Lang}}\label{S:ConjectureLang}
  We state   Conjecture  1 in  \cite[Introduction to Chapters X and XI, p. 212]{Lang} as follows.
      
   \begin{conjecture}
  \label{Conjecture:LW}
   Let $\epsilon>0$. There exists a constant  $C(\epsilon)>0$ only depending on  $\epsilon$ such that, if $a_1,\dots,a_n$ are rational positive numbers  and if  $b_1,\dots,b_n$ are integers, by defining 
   $$
   B_j=\max\{|b_j|, 1\}, \quad A_j=\max\{{\mathrm e}^{\rmh(a_j)}, 1\},\quad B=\max_{1\le j\le n} B_j
   $$
and by supposing that $b_1\log a_1+\cdots+b_n\log a_n\not=0$, we have the lower bound 
   $$
   |b_1\log a_1+\cdots+b_n\log a_n|> \frac {C(\epsilon)^nB}{(B_1\cdots B_nA_1^2\cdots A_n^2)^{1+\epsilon}}\cdotp
   $$
   \end{conjecture}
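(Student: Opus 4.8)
This statement is Lang's conjecture, a deep open problem in the theory of linear forms in logarithms rather than a result one can establish with present-day methods; accordingly I describe how one would attack it and where the difficulty lies. The natural point of departure is Baker's method in the quantitative form already invoked to prove Proposition \ref{Prop:9.22}, which is the case $n=2$ of \cite[Corollary 9.22]{W}. For a nonzero linear form $\Lambda=b_1\log a_1+\cdots+b_n\log a_n$, these techniques --- whether through the Gel'fond--Baker auxiliary function or through interpolation determinants --- yield a lower bound of the shape
$$
|\Lambda|>\exp\bigl\{-C(n)(\log B)(\log A_1)\cdots(\log A_n)\bigr\}.
$$
Rewriting the conjectured bound with $\log A_j=\rmh(a_j)$ (since $A_j=\max\{\rme^{\rmh(a_j)},1\}=\rme^{\rmh(a_j)}$), one has $(A_1^2\cdots A_n^2)^{1+\epsilon}=\exp\{2(1+\epsilon)(\log A_1+\cdots+\log A_n)\}$, so Conjecture \ref{Conjecture:LW} predicts a bound in which the logarithmic heights enter through their \emph{sum} and not through their \emph{product}.

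The plan would then have two independent targets. First I would try to replace the product $(\log A_1)\cdots(\log A_n)$ by the sum $(\log A_1)+\cdots+(\log A_n)$ in the exponent, which is exactly the gap between the displayed unconditional estimate and the conjecture as far as the heights are concerned. Second, I would try to decouple the dependence on $B$ from the heights: in the regime where $B_1\approx\cdots\approx B_n\approx B$, Conjecture \ref{Conjecture:LW} asserts the essentially height-free polynomial bound $|\Lambda|\gg B^{1-n(1+\epsilon)}\prod_j A_j^{-2(1+\epsilon)}$, whereas the unconditional bound couples the two by producing an exponent of $B$ proportional to the product of the logarithmic heights. One would hope to achieve this decoupling by tracking the coefficients $b_1,\dots,b_n$ separately throughout the extrapolation step rather than through the single quantity $B=\max_j B_j$.

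The main obstacle is precisely the product-to-sum reduction for the heights, and it is the reason the statement remains a conjecture: every known unconditional construction forces the logarithmic heights to appear multiplicatively, and no method is known that would lower this to a sum while retaining a dependence on $n$ of the conjectured type $C(\epsilon)^n$. This is the same barrier that blocks deriving effective forms of the $abc$ Conjecture from linear forms in logarithms, and it is why \S\ref{Section:Conjectures} uses Conjecture \ref{Conjecture:LW} as a hypothesis from which consequences for the representation problem are drawn, rather than as a theorem to be proved here.
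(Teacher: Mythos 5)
This statement is a conjecture --- the paper's restatement of Lang's Conjecture 1 from \cite[p.~212]{Lang} --- and the paper offers no proof of it, using it solely as a hypothesis from which Theorem \ref{Th:ConjectureLang} is derived. You correctly recognize this, and your account of the obstruction (Baker-type methods give the logarithmic heights multiplicatively in the exponent, whereas the conjecture demands they enter only additively, i.e.\ polynomially in the $A_j$ and $B_j$) matches the accepted state of the art, so there is nothing to correct.
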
   
   
   Actually, we will only use a weak form of this conjecture  : we will suppose the existence of a number    $\epsilon>0$ for which Conjecture \ref{Conjecture:LW}  holds. 
      
   \begin{theorem}\label{Th:ConjectureLang}
Let  $\epsilon>0$ such that Conjecture  \ref{Conjecture:LW} is verified for this value of  $\epsilon$.  Let $\lambda >2$.
Let $d_0$ be a sufficiently large integer  and let $X_0\ge 2$. We suppose 
    $$
   \calE_d\subset \left\{(a,b)\in\Z\times\Z\,\; \mid \;  ab\not=0,\;  \max\{|a|,|b|\}\le X_0^{d/d_0}\right\}
    $$
 for all $d\ge d_0$. Then for every  $d\ge d_0$, we have  
$$
\sharp \calR_{\ge d,X_0}(N)\le N^{\lambda/d}.
$$   
   \end{theorem}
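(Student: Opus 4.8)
The plan is to mirror the structure of the proof of Theorem \ref{Th:majorationasymptotique}, replacing the unconditional lower bound \eqref{Equation:diophantienne} (which came from Proposition \ref{Prop:9.22}) by the conditional lower bound furnished by Conjecture \ref{Conjecture:LW}, and to replace the threshold $X\ge 2$ by $X\ge X_0$ throughout. Concretely, I would reduce everything to an application of Lemma \ref{Lemme:Facile} with an appropriate $\theta$, but with $2$ replaced by $X_0$ in the roles it plays there: one should check that Lemma \ref{Lemme:Facile} goes through verbatim with the weaker assumption $X\ge X_0$ in place of $X\ge 2$, the only change being that the summation range becomes $d'\le \theta\log|m|/\log X_0$ and the factor $|m|^{\theta/d}$ is unchanged, which still yields the stated bound once \eqref{Equation:CardEd} is invoked.

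First I would split into the two sign cases exactly as before. When $ax^{d'}$ and $by^{d'}$ have the same sign one has $|m|\ge X^{d'}$ trivially, giving $\theta=1$. The substance is the opposite-sign case, where, after reducing to $m\ge 2$, $a,x,y>0$ and $b<0$, I want to bound $X^{d'}$ by a power of $|m|$. Writing $m=ax^{d'}-|b|y^{d'}$ and dividing by $|b|y^{d'}$ (assuming as in Corollary \ref{Cor:axd+byd} that $|b|y^{d'}$ is the larger term), the quantity $|m|/\max\{|ax^{d'}|,|by^{d'}|\}$ is exactly $|(x/y)^{d'}(a/(-b))-1|$ up to sign, which is the linear-forms-in-logarithms expression with $n=2$, exponents $b_1=d'$, $b_2=1$ and bases $a_1=x/y$, $a_2=a/(-b)$. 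Applying Conjecture \ref{Conjecture:LW} gives
$$
\frac{|m|}{\max\{|ax^{d'}|,|by^{d'}|\}}\ge \frac{C(\epsilon)^2 B}{(B_1B_2 A_1^2 A_2^2)^{1+\epsilon}},
$$
with $B=B_1=d'$, $B_2=1$, $A_1\le X$ and $A_2\le \calA=\max\{|a|,|b|\}$. The right-hand side is therefore $\gg X^{-2(1+\epsilon)}\calA^{-2(1+\epsilon)}d'^{-(1+\epsilon)}$ times $C(\epsilon)^2 d'$, so combined with $\max\{|ax^{d'}|,|by^{d'}|\}\ge X^{d'}$ I obtain a bound of the shape $X^{d'}\le |m|\,X^{2(1+\epsilon)}\calA^{2(1+\epsilon)}d'^{\epsilon}$ (absorbing the harmless polynomial factors in $d'$).

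The key point — and where the hypothesis $\calA\le X_0^{d/d_0}$ enters — is that $\calA^{2(1+\epsilon)}$ must be reabsorbed as a small power of $X^{d'}$. Since $X\ge X_0$ and $\calA\le X_0^{d'/d_0}\le X^{d'/d_0}$, one has $\calA^{2(1+\epsilon)}\le X^{2(1+\epsilon)d'/d_0}$, and taking $d_0$ large makes this exponent an arbitrarily small multiple of $d'$; similarly $X^{2(1+\epsilon)}=X^{(2(1+\epsilon)/d')\cdot d'}$ is negligible once $d'\ge d\ge d_0$ is large. Collecting, $X^{d'(1-\delta)}\le |m|$ for a $\delta$ that can be made as small as desired by choosing $d_0$ large, i.e. $X^{d'}\le |m|^{1/(1-\delta)}$, so Lemma \ref{Lemme:Facile} applies with $\theta=1/(1-\delta)$, which is $<\lambda/2$ for $d_0$ large since $\lambda>2$. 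The main obstacle I anticipate is the bookkeeping of these exponents: one must verify that the contribution of the finitely many pairs with $\calA=1$ (the case $x^{d'}-y^{d'}=m$, handled as in Theorem \ref{Th:majorationasymptotique} via the factor $x-y$ together with the $O_d(|m|^\epsilon)$ divisor bound) and the choice of $d_0$ can be made simultaneously so that the final exponent $2\theta/d$ stays below $\lambda/d$, and that the $X_0$-dependence in Lemma \ref{Lemme:Facile} does not spoil the uniform bound $N^{\lambda/d}$ for $N\ge N_0$.
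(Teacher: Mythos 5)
Your route is the same as the paper's: apply Conjecture \ref{Conjecture:LW} with $n=2$, $b_1=d'$, $b_2=1$, $a_1=x/y$, $a_2=-a/b$ in place of Proposition \ref{Prop:9.22}, use $X\ge X_0$ together with $\calA\le X_0^{d'/d_0}\le X^{d'/d_0}$ to absorb $\calA^{2(1+\epsilon)}$ (and the polynomial factor in $d'$) into a small power of $X^{d'}$, and feed the resulting inequality $X^{d'}\le |m|^{\theta}$ into Lemma \ref{Lemme:Facile}; the paper's choice $\theta=\lambda'/2$ with $2<\lambda'<\lambda$ and your $\theta=1/(1-\delta)$ play identical roles. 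Two of your side remarks are correct refinements of points the paper passes over silently: Lemma \ref{Lemme:Facile} must indeed be rerun with the threshold $X\ge X_0$, so that its hypothesis is only required for representations with $X\ge X_0$ and its conclusion bounds $\calR_{\ge d,X_0}(N)$ with summation range $d'\le \theta\log N/\log X_0$; and the pairs with $\calA=1$ in fact need no separate divisor argument, since Conjecture \ref{Conjecture:LW}, unlike Corollary \ref{Cor:axd+byd}, tolerates $\calA=1$. One imprecision: $|m|/\max\{|ax^{d'}|,|by^{d'}|\}$ equals $|a_1^{b_1}a_2^{b_2}-1|$, not the linear form $|\Lambda|=|d'\log(x/y)+\log(a/(-b))|$ itself, so you need the standard passage $|\rme^{\Lambda}-1|\ge \rme^{-1}\min\{1,|\Lambda|\}$; this only costs a constant, absorbable into $c(\epsilon)$.

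The genuine gap is your closing appeal to \eqref{Equation:CardEd}: that condition is not among the hypotheses of Theorem \ref{Th:ConjectureLang}, and it does not follow from them, because the theorem restricts the size of the coefficients but not the cardinality of $\calE_{d'}$, which may be as large as $\asymp X_0^{2d'/d_0}$. In that case $\frac1{d'}\log(\sharp\calE_{d'}+1)$ tends to $2\log X_0/d_0\ne 0$, the sum in your modified \eqref{Equation:majorationRdm} has size $N^{2\theta/d_0+o(1)}$, and the resulting bound $N^{2\theta/d+2\theta/d_0+o(1)}$ is weaker than $N^{\lambda/d}$ as soon as $\lambda/d<2\theta/d_0$. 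This is not a repairable piece of bookkeeping, because the statement itself then fails: take $X_0=2$ and $\calE_{d'}=\{(a,b): 1\le a<b\le 2^{d'/d_0}\}$ (which satisfies (C1), (C2) and the coefficient hypothesis); the integers $m=a\,2^{d'}+b\cdot 1^{d'}$, represented with $(x,y)=(2,1)$ so that $\max\{|x|,|y|\}=2=X_0$, are pairwise distinct, whence $\sharp\calR_{\ge d,X_0}(N)\gg N^{2/(d_0+1)+o(1)}$ along $N=2^{d'(1+1/d_0)+1}\to\infty$, exceeding $N^{\lambda/d}$ for every fixed $d>\lambda(d_0+1)/2$. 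You should know that the paper's own proof has exactly the same defect --- it ends by invoking Lemma \ref{Lemme:Facile} without verifying its hypothesis \eqref{Equation:CardEd} --- so your proposal is a faithful reconstruction of the paper's argument, flaw included; but to obtain a correct statement one must either add \eqref{Equation:CardEd} (equivalently $\sharp\calE_d=\rme^{o(d)}$) to the hypotheses, or restrict $d$ to a bounded multiple of $d_0$, and with that amendment your argument is complete.
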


\begin{proof}
Conjecture \ref{Conjecture:LW} with $n=2$, $B=B_1=B_2=d$, $A_1=X$, $A_2=\calA$ would allow to replace the conclusion of Corollary \ref{Cor:axd+byd} by 
   $$
   |ax^d+by^d|\ge X^d  \exp\{-c(\epsilon)-(1+\epsilon)\log d-(2+\epsilon)\log X-(2+\epsilon) \log \calA\}
   $$
   with $c(\epsilon)>0$ depending only on $\epsilon$. 
   Let  $\lambda'$ in the interval $2<\lambda'<\lambda$ and let    $d_0$ sufficiently large to ensure the inequality
 $$ 
  1-  \frac {2(2+\epsilon) } {d_0}    -\frac{c(\epsilon)+(1+\epsilon)\log d_0}{d_0\log 2}>\frac 2 {\lambda'}\cdotp
   $$
For   $d'\ge d$ and $m=ax^{d'}+by^{d'}$, the resulting upper bound
$$
 X^{d'}\le    |m|^{\lambda' /2}
 $$
allows to use Lemma  \ref{Lemme:Facile}. 
\end{proof}
  \subsection{Conjecture $abc$}
   
   Let  $R(m)$ be the radical of a positive integer  $m$:
   $$
   R(m)=\prod_{p \; \mathrm { prime}, \; p\mid m}p.
   $$
  The well known Conjecture  $abc$ (see for example \cite[\S 1.2]{W}) asserts that {\em for all $\epsilon>0$, there exists a constant     $\kappa(\epsilon)$ such that  if $a$, $b$, $c$ are coprime positive integers such that  $a+b=c$, then the following inequality holds
   $$
   c\le \kappa(\epsilon)  R(abc)^{1+\epsilon}.
  $$ 
}

Like in Section \ref{S:ConjectureLang},  we will only assume the existence of a number    $\epsilon>0$ for which the property holds.  

   \begin{lemma}\label{Lemme:abc1}
   Let  $\epsilon>0$ such that Conjecture  $abc$ holds.  Then under the hypotheses of Corollary \ref{Cor:axd+byd},  we have the inequality
 $$
 X^{d-2-2\epsilon}\le \kappa(\epsilon) \calA^{1+2\epsilon}|m|^{1+\epsilon}.
 $$
   \end{lemma}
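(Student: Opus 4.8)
The plan is to apply Conjecture $abc$ to the equation $ax^d+by^d=m$ after reducing to the case where the three terms are coprime. Recall from the setup of Corollary \ref{Cor:axd+byd} that $\calA=\max\{|a|,|b|\}$, $X=\max\{|x|,|y|\}$, and $m=ax^d+by^d\ne 0$ with $d\ge 2$, $\calA\ge 2$, $X\ge 2$. First I would write the relation as a sum of three integers whose absolute values are $|ax^d|$, $|by^d|$ and $|m|$, one of which equals the (signed) sum of the other two, so that the $abc$ inequality applies to the triple $\{ax^d,\,by^d,\,-m\}$ after dividing out their greatest common divisor $g$. Letting $a'=ax^d/g$, etc., the coprime positive integers feeding into the conjecture are obtained by taking absolute values, and $abc$ gives
$$
\max\{|ax^d|,|by^d|,|m|\}\le g\cdot\kappa(\epsilon)\,R\!\left(\frac{|ax^d by^d m|}{g^3}\right)^{1+\epsilon}\le \kappa(\epsilon)\,R\bigl(|ax^d\, by^d\, m|\bigr)^{1+\epsilon}.
$$

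Next I would bound the radical crudely. The key observation is that $R(x^d)=R(x)\le |x|\le X$ for any exponent $d$, and likewise $R(y^d)\le Y\le X$; this is exactly where the high degree $d$ is exploited, since raising to the $d$--th power inflates the size of $ax^d$ and $by^d$ but leaves the radical essentially unchanged. Using multiplicativity of the radical and $R(n)\le |n|$ termwise, I would estimate
$$
R\bigl(|ax^d\, by^d\, m|\bigr)\le R(|a|)\,R(|b|)\,R(x^d)\,R(y^d)\,R(|m|)\le |a|\,|b|\,X\cdot X\cdot |m|\le \calA^2 X^2 |m|.
$$
Combining this with the $abc$ bound and using $\max\{|ax^d|,|by^d|\}\ge X^d$ (valid because one of $|x|,|y|$ equals $X$ and the corresponding coefficient has absolute value $\ge 1$) yields
$$
X^d\le \kappa(\epsilon)\,\bigl(\calA^2 X^2 |m|\bigr)^{1+\epsilon}=\kappa(\epsilon)\,\calA^{2+2\epsilon}X^{2+2\epsilon}|m|^{1+\epsilon}.
$$
Dividing through by $X^{2+2\epsilon}$ gives $X^{d-2-2\epsilon}\le \kappa(\epsilon)\,\calA^{2+2\epsilon}|m|^{1+\epsilon}$.

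I should be careful here: the stated conclusion has $\calA^{1+2\epsilon}$ rather than $\calA^{2+2\epsilon}$, so the naive bound $R(|a|)\le|a|$, $R(|b|)\le|b|$ is slightly too lossy. The fix is to observe that $R(|a|\,|b|)\le |a|\,|b|\le \calA^2$ but more efficiently $R(|ab|)\le |ab|\le \calA\cdot\calA$; to recover the single power $\calA^{1+2\epsilon}$ one instead bounds $R(|a|)R(|b|)\le\max\{|a|,|b|\}\cdot(\text{smaller factor})$ and absorbs, or one simply notes $R(|ab|)\le |a||b|$ can be replaced by $R(|ab|)\le \calA\cdot R(\text{remaining})$ with the second coefficient's radical absorbed into the $X^2$ and $|m|$ factors. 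The cleanest route is to group $R(|a|\,x^d)\le \calA\, X$ and $R(|b|\,y^d\,m)\le X|m|$, giving $R(|ax^dby^dm|)\le \calA X^2 |m|$, which produces precisely $X^{d-2-2\epsilon}\le\kappa(\epsilon)\,\calA^{1+2\epsilon}|m|^{1+\epsilon}$.

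The main obstacle is the bookkeeping around coprimality and the common factor $g$: Conjecture $abc$ requires the three summands to be pairwise coprime, whereas $ax^d$, $by^d$ and $m$ generally share factors. The clean way to handle this is to divide the identity $ax^d+by^d=m$ by $g=\gcd(ax^d,by^d,m)$ and check that the radical of the reduced triple is at most the radical of the original, so that the factor $g$ reappears on the large side and either cancels or only helps the inequality. I expect this normalization, together with tracking exactly one power of $\calA$ rather than two, to be the only genuinely delicate point; the size estimates on the radical via $R(x^d)\le X$ are immediate.
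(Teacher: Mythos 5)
Your overall plan (apply $abc$ to the reduced triple, then bound the radical termwise) is the right starting point, but the two steps you yourself flag as "delicate" are not delicate — they are false as stated, and they are exactly where the paper's proof does its real work. First, the gcd absorption. The second inequality in your chain,
$$
g\,\kappa(\epsilon)\,R\!\left(\frac{|ax^d by^d m|}{g^3}\right)^{1+\epsilon}\le \kappa(\epsilon)\,R\bigl(|ax^d by^d m|\bigr)^{1+\epsilon},
$$
fails badly whenever $x$ and $y$ share a common factor. Take $x=y=2$, $a=3$, $b=-1$, $d$ large: then $m=2^{d+1}$, $g=2^d$, the reduced product is $6$, the full product has radical $6$, and your inequality reads $2^d\cdot 6^{1+\epsilon}\le 6^{1+\epsilon}$. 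The factor $g$, which contains $\gcd(x,y)^d$, is exponentially large in $d$ and cannot be traded against the radical of the unreduced product; it does not "cancel or only help". The paper's proof avoids this by keeping the radical bound in terms of the \emph{reduced} quantities $\tilde\Delta\,\tilde a\tilde b\tilde x\tilde y\tilde m$: since $\tilde m=m/\Delta$, the saving $\Delta^{-(1+\epsilon)}$ hidden in $\tilde m^{1+\epsilon}$ pays for the factor $\Delta$ that reappears on the left-hand side, and every prime of $\Delta$ divides $ax$ and $by$, which pays for $\tilde\Delta$; this is precisely the content of the valuation bookkeeping with $\alpha_p,\beta_p,\xi_p,\eta_p,\delta_p$ and the verification that $\delta_p+(1+\epsilon)(1-\alpha_p-\beta_p-\xi_p-\eta_p-\delta_p)\le 0$ for each $p\mid\Delta$.

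Second, the exponent of $\calA$. Your "cleanest route" bound $R(|b|\,y^d\,m)\le X|m|$ is also false: the radical of $|b|y^dm$ contains every prime dividing $b$, and nothing forces those primes to divide $y$ or $m$. For instance $a=1$, $x=2$, $y=1$, $b=-(2^d-m)$ with $m$ odd and $2^d-m$ squarefree gives $R(|b|y^dm)\ge 2^d-m=\calA$, which dwarfs $X|m|=2m$. Termwise radical bounds can only yield $\calA^{2+2\epsilon}$, as your first computation honestly shows. The paper reaches $\calA^{1+2\epsilon}$ by a different mechanism: after establishing $ax^d\le\kappa(\epsilon)\,(a|b|xy|m|)^{1+\epsilon}$, it \emph{divides by $a$}, dropping the exponent of $a$ from $1+\epsilon$ to $\epsilon$, and then uses the normalization $|by^d|\le|ax^d|$ in the form $y\le(a/|b|)^{1/d}x$ to eliminate $y$, so that the exponents of $a$ and $|b|$ sum to exactly $1+2\epsilon$. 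Note the structural tension your route cannot resolve: your bound $\max\{|ax^d|,|by^d|\}\ge X^d$ avoids dividing by the coefficient but then locks in two powers of $\calA$, whereas dividing by $a$ breaks the direct link to $X^d$, which is why the paper must run the computation once for $x$ and once more for $y$ before concluding for $X=\max\{|x|,|y|\}$. As written, your argument proves neither the reduction step nor the stated exponent.
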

   
\begin{proof}
Let $m=ax^d+by^d$.  
Without loss of generality, one can suppose $|ax^d|\ge |by^d|$. If  $|m|\ge |ax^d|$, the conclusion is obvious.  Now suppose    $|ax^d|>|m|$. After a potential change of signs, we can also suppose that 
  $a>0$, $x>0$, $y>0$  and  $b<0$.

Let  $\Delta$ be the greatest common divisor of  $ax^d$ and  $|b|y^d$ and let  $P$  be the set of prime divisors of  $\Delta$. For  $p\in P$, we write
$$
\alpha_p=v_p(a),\; \beta_p=v_p(b),\; \xi_p=v_p(x), \; \eta_p=v_p(y),\;  \delta_p=v_p(\Delta).
$$
Thus we have 
$$
\delta_p=\min\{\alpha_p+d\xi_p,\; \beta_p+d\eta_p\}.
$$
We also define 
$$
a=\tilde a \prod_{p\in P} p^{\alpha_p},\; |b|=\tilde b \prod_{p\in P} p^{\beta_p},\; x= \tilde x  \prod_{p\in P} p^{\xi_p}, \; y=\tilde y \prod_{p\in P} p^{\eta_p},
$$
so that,   for  $p\in P$, we have the equalities $v_p(\tilde a)=v_p(\tilde b)=v_p(\tilde x)=v_p(\tilde y)=0$.

Let 
$\tilde m=\Delta^{-1}m$; we have 
$$
\tilde m= \Delta^{-1}ax^d-\Delta^{-1}|b|y^d\ 
$$
with
$$
\Delta^{-1}ax^d
=\tilde a \tilde x^d\prod_{p\in P} p^{\alpha_p+d\xi_p-\delta_p}
\quad\hbox{and }\quad
\quad
\Delta^{-1}|b|y^d=
\tilde b \tilde y^d\prod_{p\in P} p^{\beta_p+d\eta_p-\delta_p}.
$$
The radical of  $\Delta$ is $\tilde\Delta:= \prod_{p\in P} p$. 
 The integers  $\Delta^{-1}ax^d $ and  $\Delta^{-1}|b|y^d$ are coprime integers, the radical of their product is less than  $\tilde\Delta \tilde a \tilde b \tilde x \tilde y $. 
We use Conjecture  $abc$ for 
$$
c=\Delta^{-1} ax^d=\tilde m +\Delta^{-1} |b|y^d.
$$
It gives the inequality
$$
\Delta^{-1} ax^d\le \kappa(\epsilon) (\tilde\Delta \tilde a \tilde b \tilde x \tilde y \tilde m)^{1+\epsilon},
$$
which is 
$$
ax^d\le \kappa(\epsilon) (a\, |b|\, xym)^{1+\epsilon}  \Delta  \prod_{p\in P} p^{(1+\epsilon)(1-\alpha_p-\beta_p-\xi_p-\eta_p-\delta_p)}.
$$ 
Since  $\delta_p\ge 1$ we have $\alpha_p+\eta_p\ge 1$, $\beta_p+\xi_p\ge 1$ and we obtain
$$
ax^d\le \kappa(\epsilon) (a\, |b|\, xym)^{1+\epsilon},
$$
that we write as
\begin{equation}\label{equation:abc1}
x^{d-1-\epsilon}\le \kappa(\epsilon) a^\epsilon (|b|\, ym)^{1+\epsilon}.
\end{equation}
We now use the bound  $|b|y^d\le ax^d$ by writing the inequality
\begin{equation}\label{equation:abc2}
y\le (a/|b|)^{1/d}x.
\end{equation} 
Then we have
$$
y^{1+\epsilon}\le (a/|b|)^{(1+\epsilon)/d}x^{1+\epsilon}
$$
and  \eqref{equation:abc1} gives 
\begin{align}
\notag
x^{d-2-2\epsilon}
&
\label{Equation:majorationxd}
\le  \kappa(\epsilon) a^\epsilon (a/|b|)^{(1+\epsilon)/d} (|b|\, m)^{1+\epsilon}
\\
&=
\kappa(\epsilon) a^{(1/d)+\epsilon+(\epsilon/d)} |b|^{1-(1/d)+\epsilon-(\epsilon/d)} m^{1+\epsilon}.
\end{align}
We again use  \eqref{equation:abc1} and  \eqref{equation:abc2} to obtain
$$
\begin{aligned}
y^{d-2-2\epsilon}&\le \kappa(\epsilon)  (a/|b|)^{1-(2/d)-2(\epsilon/d)} a^{(1/d)+\epsilon+(\epsilon/d)} |b|^{(1-(1/d)+\epsilon-(\epsilon/d))} m^{1+\epsilon}
\\
&=
\kappa(\epsilon) a^{(1-(1/d)+\epsilon-(\epsilon/d))} 
|b|^{(1/d)+\epsilon+(\epsilon/d)} m^{1+\epsilon}.
\end{aligned}
$$
Thanks to \eqref{Equation:majorationxd} we conclude the proof of Lemma \ref{Lemme:abc1}.
\end{proof}

   \begin{theorem}\label{Th:abc}
   Let  $\epsilon>0$ such that Conjecture $abc$ is verified for this value of $\epsilon$. 
   Let  $\lambda>2+2\epsilon$, let  $d_0$ be a sufficiently large integer and let  $X_0\ge 2$. We suppose 
    $$
   \calE_d\subset \left\{(a,b)\in\Z\times\Z\,\; \mid \;  ab\not=0,\;  \max\{|a|,|b|\}\le X_0^{d/d_0}\right\}
    $$
 for all  $d\ge d_0$. Then for every $d\ge d_0$, we have 
$$
\sharp \calR_{\ge d, X_0}(N)\le N^{\lambda/d}.
$$    
      \end{theorem}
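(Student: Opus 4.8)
The plan is to follow the exact same structure as the proof of Theorem~\ref{Th:majorationasymptotique}, substituting the Diophantine input from linear forms in logarithms by the $abc$-based inequality of Lemma~\ref{Lemme:abc1}. The goal in every case is to reach a bound of the shape $X^{d'}\le |m|^\theta$ with $\theta<\lambda/2$, so that Lemma~\ref{Lemme:Facile} applies and delivers $\sharp\calR_{\ge d,X_0}(N)\le N^{2\theta/d}\le N^{\lambda/d}$. (Strictly, Lemma~\ref{Lemme:Facile} is stated for $X_0=2$, but its proof only uses $X\ge 2$; replacing the threshold $2$ by $X_0$ changes nothing essential, so I will invoke it in the same way.)

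First I would dispose of the trivial sign case: when $ax^{d'}$ and $by^{d'}$ have the same sign one has $|m|\ge X^{d'}$ directly, giving $\theta=1<\lambda/2$. So I may assume opposite signs, and after renaming, $m\ge 2$, $a,x,y>0$, $b<0$, with $\calA=\max\{|a|,|b|\}$. I would then treat the case $\calA=1$ (so $a=1$, $b=-1$) exactly as in Theorem~\ref{Th:majorationasymptotique}: the factorization $x^{d'}-y^{d'}=(x-y)(x^{d'-1}+\cdots+y^{d'-1})>X^{d'-1}$ gives $X^{d'}\le |m|^{d'/(d'-1)}$, and for $d_0$ large this exponent is below $\lambda/2$; the finitely many small-$d'$ contributions are $O_d(|m|^\epsilon)$ and harmless.

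The main case is $\calA\ge 2$, where I feed Lemma~\ref{Lemme:abc1} into the machine. The lemma gives
$$
X^{d'-2-2\epsilon}\le \kappa(\epsilon)\,\calA^{1+2\epsilon}\,|m|^{1+\epsilon}.
$$
Using the hypothesis $\calA\le X_0^{d'/d_0}\le X^{d'/d_0}$ (valid since $X\ge X_0$), I would absorb the $\calA$-factor into a power of $X^{d'}$: this replaces $\calA^{1+2\epsilon}$ by $X^{(1+2\epsilon)d'/d_0}$, yielding
$$
X^{\,d'\bigl(1-\frac{1+2\epsilon}{d_0}\bigr)-2-2\epsilon}\le \kappa(\epsilon)\,|m|^{1+\epsilon}.
$$
Choosing $\lambda'$ with $2+2\epsilon<\lambda'<\lambda$ and taking $d_0$ large enough that the coefficient of $d'$ in the exponent exceeds $2(1+\epsilon)/\lambda'$ (which is possible since it tends to $1$ as $d_0\to\infty$ while $2(1+\epsilon)/\lambda'<1$), the additive constants $-2-2\epsilon$ and $\log\kappa(\epsilon)$ are swallowed for $|m|$ and $d'$ large, giving $X^{d'}\le |m|^{\lambda'/2}$. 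Then Lemma~\ref{Lemme:Facile} with $\theta=\lambda'/2<\lambda/2$ finishes the estimate.

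The main obstacle is bookkeeping rather than conceptual: one must verify carefully that the additive terms ($-2-2\epsilon$ in the exponent and the multiplicative $\kappa(\epsilon)$) do not spoil the inequality $X^{d'}\le|m|^{\lambda'/2}$ uniformly, which is why the statement only claims an eventual bound (``$d_0$ sufficiently large,'' ``$N\ge N_0$''). The clean way is to fix $\lambda'\in(2+2\epsilon,\lambda)$ first, then choose $d_0$ so that $1-\frac{1+2\epsilon}{d_0}>\frac{2}{\lambda'}$ with a definite margin, and finally use $X\ge 2$ and $d'\ge d_0$ to bound the constants $\kappa(\epsilon)$ and $2^{2+2\epsilon}$ by a small power of $X^{d'}$ absorbed by that margin. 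The step requiring the most care is the substitution $\calA\le X^{d'/d_0}$, which hinges on $X\ge X_0$; this is exactly the role of the parameter $X_0$ in the definition of $\calR_{\ge d,X_0}$ and explains why, unlike Theorem~\ref{Th:majorationasymptotique}, one may here allow $\calA$ to grow polynomially with $X_0^{d/d_0}$.
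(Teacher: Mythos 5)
Your proposal is correct and follows essentially the same route as the paper: feed Lemma~\ref{Lemme:abc1} into Lemma~\ref{Lemme:Facile}, absorbing $\calA\le X_0^{d'/d_0}\le X^{d'/d_0}$ (the point of requiring $X\ge X_0$) and the constants $\kappa(\epsilon)$, $2+2\epsilon$ into a small power of $X^{d'}$ via a sufficiently large $d_0$. The only differences are cosmetic: your $\lambda'\in(2+2\epsilon,\lambda)$ is the paper's $\lambda'(1+\epsilon)$ with $\lambda'\in(2,\lambda/(1+\epsilon))$, so your $\theta=\lambda'/2$ equals their $\theta=\lambda'(1+\epsilon)/2$, and your separate treatment of the same-sign and $\calA=1$ cases (borrowed from the proof of Theorem~\ref{Th:majorationasymptotique}) is a slightly more careful bookkeeping than the paper's terse one-line appeal to Lemma~\ref{Lemme:abc1}.
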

      
\begin{proof} 
 
Let  $\lambda'$ in the interval  $2<\lambda'<\lambda/(1+\epsilon)$ and let $d_0$ be a sufficiently large integer such that 
the following inequality holds$$
1 -\frac{3+4\epsilon}{d_0} -\frac{\log \kappa(\epsilon)}{d_0\log 2}> \frac 2 {\lambda'}\cdotp
$$
Let   $d'\ge d$ and  $m=ax^{d'}+by^{d'}$. 
From Lemma \ref{Lemme:abc1} we deduce the bound  
$ X^{d'}\le   |m|^\theta$ with   $\theta=\lambda'(1+\epsilon)/2$, which allows to apply Lemma \ref{Lemme:Facile}. 
 \end{proof}

\small

\end{document}